\theoremstyle{plain}
\newtheorem{theorem}{Theorem}[section]
\newtheorem*{theorem*}{Theorem}
\newtheorem*{conj*}{Conjecture}
\newtheorem{lemma}[theorem]{Lemma}
\newtheorem{corollary}[theorem]{Corollary}
\newtheorem{computational theorem}[theorem]{Computational Theorem}
\theoremstyle{definition}
\newtheorem{remark}[theorem]{Remark}
\newtheoremstyle{named}{}{}{\itshape}{}{\bfseries}{.}{.5em}{\thmnote{#3 }#1}
\theoremstyle{named}
\newtheorem*{namedtheorem}{Characterization of simple curves}
\newcommand{\Sp}{S}
\newcommand{\St}{T}
\newcommand{\emm}{\mathrm{t}_{X,Y}}
\newcommand{\gp}{\gamma_{P'}(x,y)} 
\newcommand{\gq}{\gamma_{Q'}(x,y)} 
\newcommand{\Gp}{\Gamma_{P}(x,y)} 
\newcommand{\Gq}{\Gamma_{Q}(x,y)} 
\newcommand{\U}{\mathcal{U}}
\newcommand{\V}{\mathcal{V}}
\newcommand{\K}{\mathbb{K}}
\newcommand{\Z}{\mathbb{Z}}
\renewcommand{\H}{{\mathbb{H}}}
\newcommand{\la}{\langle}
\newcommand{\ra}{\rangle}
\theoremstyle{plain} 
\newcommand{\thistheoremname}{}
\newtheorem*{genericthm*}{\thistheoremname}
\begin{document}
\title[Explicit Lie bracket of closed geodesics]{Explicit Lie bracket of closed geodesics on a hyperbolic surface with applications}

\author{Moira Chas}
\address{Stony Brook Mathematics Department and Institute for Mathematical Sciences}
\email{moira.chas@stonybrook.edu}

\author{ Arpan Kabiraj}
\address{Department of Mathematics, Indian Institute of Technology Palakkad}
\email{arpaninto@iitpkd.ac.in}
\begin{abstract}
 In this note we develop a tool box of non-Euclidean plane geometry methods that yield a constructive way to define in terms of closed geodesics  the Goldman bracket on deformation classes of closed, directed curves. We use this construction  to algebraically characterize   closed geodesics without self- intersection on  hyperbolic surfaces.

\end{abstract}
\maketitle


\section{Introduction}

In the mid eighties  Goldman \cite{goldman_invariant_1986} defined an unanticipated Lie bracket on the vector space with basis the set of deformation classes of closed, oriented curves on a closed surface. The definition combined two basic operations on curves: the intersection and the loop product. More precisely, the bracket of (the classes of) two curves intersecting transversally is the signed sum of  (the classes of) the  loop products over  intersection points.

Once there is an algebraic structure associated to  geometric objects, mathematicians, often wonder  how much of the geometry  can be recovered from the algebra. In our case, this compelled us to find a way to characterize (classes of) simple closed curves, in terms of the Goldman Lie bracket, leading  to  the following result:
\begin{namedtheorem}  A nonpower conjugacy class $x$ contains a simple representative if and only if one of the following holds:
\begin{enumerate}
    \item $[x,x^n]=0$ for  some $n$ in $\{2,3,\dots\}$ or, 
    \item $[x,\bar{x}]=0$ (where $\bar{x}$ denotes the class of $x$ with opposite orientation)
\end{enumerate}

\end{namedtheorem} \label{Main Theorem}
\begin{remark}
   Part $(2)$ of the above result is recently proved in \cite{alonso2022lie}.  The proof uses a generalization of the combinatorial methods developed by the first author in \cite{chas2004combinatorial}.
\end{remark}

Historical note: The item (2) in the above theorem was unanticipated even by Goldman and was forced on our notice by a large amount of computer evidence. Thus, it can be regarded as the innovative result of this work. Moreover,  its proof seems to require these methods of plane hyperbolic geometry. The quadratic operation $x \longrightarrow [x, \bar{x}]$  now seems very interesting. Item (1) was proven earlier for $n \ge 3$ and for surfaces with boundary by group theory methods (references and more details below).

Since each free homotopy class of closed, directed curves on a hyperbolic surface is represented by a unique closed, directed geodesic, the Goldman Lie bracket of two such closed geodesics is the signed sum of  closed directed geodesics. For each intersection point of two closed geodesics we give an elementary  geometric construction of a geodesic on the universal cover that projects to  the closed geodesic in the class of that term. 

Given two closed directed geodesics on a surface intersecting at a point $P$, a piecewise, directed, closed geodesic is determined by going around one of the geodesics, and then the other, starting and ending at $P$ in both cases. (This piecewise geodesic is, of course, in the class of term of the bracket associated to the point $P$.) The lift of this piecewise geodesic to the universal cover of the surface is piecewise geodesic that zig-zags over a lift of the closed geodesic in the term (because both have the same endpoints on the circle at infinity on the hyperbolic plane). When we have two canceling terms, we have two piecewise geodesics zig-zagging over a lift of the closed geodesic in the term. The possible configurations of these three curves display geodesic triangles and quadrilaterals in the universal cover. By studying these shapes we obtain the main results of the present work.

In a subsequent work the Turaev Lie cobracket will be treated with these types of direct geometric constructions. 


\subsection{History of the Goldman bracket}
 
 This Lie algebra was related to the symplectic structure on the moduli space of homomorphims  of the fundamental group to various Lie groups· Turaev later augmented the discussion with a Lie bialgebra on the space generated by closed curves classes and raised questions about the cobracket and self-intersection of closed curves. One of the authors in studying Turaev's question  about the cobracket extended  the Lie bialgebra structure to the equivariant homology of the  free loop space of oriented manifolds of all dimensions \cite{chas1999string}.  The motivation for this extension of Goldman-Turaev to all manifolds was the possible connection to the Jaco-Stallings equivalent statement to the  Poincar\'e conjecture: ``Any surjective homomorphism from the fundamental group of a closed surface of genus $g$  to $F_g \times F_g$  contains an essential simple closed curve in the kernel.'' (Here, $F_g$ denotes the free group on $g$ generators). Of course, now that the Poincar\'e  conjecture is resolved, Jaco-Stallings is true. So a new problem arises:  why is it true and how can we understand it? This is one compelling reason  to study closed curves on surfaces directly and concretely as we do here.

\subsection{Relation between the Goldman Turaev Lie bialgebra and the intersection and self-intersection of curves}

In \cite{goldman_invariant_1986},  (the same paper where the definition of the Goldman bracket on (classes of) directed closed curves is introduced)  Goldman   proved that if the bracket of two (classes of) curves is zero, and one of the classes contains a  simple curve, then the two classes have disjoint representatives. This result was generalized by one of the authors in \cite{chas2010minimal}: 
the bracket of two classes  counts intersection if one of the classes  is simple.

Turaev defined the cobracket and Lie bialgebra in \cite{turaev1991skein} and remarked that this cobracket of the power of (the class of) a simple curve is zero and wondered whether the converse held.

In \cite{chas2004combinatorial}, the first author gave an algorithmic presentation of the Goldman-Turaev Lie algebra for surfaces with boundary. She wrote a computer program implementing  this presentation  \cite{minh}. Running this program, she found  examples of (classes of) curves on surfaces of genus larger than zero with cobracket zero which are not simple, nor powers of simple curves. Thus, Turaev's question is answered negatively for  surfaces of genus larger than zero.  
Despite extensive computer experiments, Chas was not able to find such counterexamples on surfaces of genus zero, which lead  to the conjecture that Turaev's cobracket detects self-intersection number in genus zero surfaces. 

 In the proof of a  lemma in \cite{goldman_invariant_1986}, it was stated that $[x,\bar{x}]=0$ for all curves $x$. By running the program \cite{minh}, one of the authors found that the opposite statement seems to hold, namely, the number of terms (counted with multiplicity) of $[x,\bar{x}]$ is twice the self-intersection number of $x$. The operation $x \longrightarrow [x,\bar{x}]$ is interesting and still mysterious. At first, one thinks, as Goldman did, that it is always zero. But computer experiments suggest, as we said the opposite. This operation is, in some sense, a quadratic form. Moreover, each of the terms of $[x,\bar{x}]$ is the conjugacy class of a commutator. 

Le Donne \cite{ledonne2008lie}  showed that in genus zero, the cobracket of a class  being equal to zero implies the class contains a power of a simple curve. The conjecture whether  in  genus zero the cobracket counts minimal self-intersection remains open.

Via computer experiments, Chas  explored various ways that the Goldman-Turaev Lie bialgebra might detect the intersection and self-intersection of closed curves on surfaces. Computer experiments suggested the statement that for each non-power class $x$, the number of terms of  $[x,x^n],$ for $n \ge 2$  or $n=-1$, (counted with multiplicity) is the product of $|2n|$ and the  self-intersection number of $x$. 
We  prove here a special case of this conjecture: if $[x,x^n]=0$ $n \ge 2$  or $n=-1$ then the non-power $x$ is  a simple curve.

This result was proven in  \cite{chas2010algebraic} for surfaces with boundary and $n \ge 3$. Similarly, the statement that  $\delta(x^n)$, $n \ge 2$ counts self-intersection was proven for the case of surfaces with boundary and $n \ge 3$ in \cite{chas2015algebraic} (where $\delta$ denotes Turaev's cobracket)
 
In \cite{chas2016extended} it is proven that$[x^p,y^q]$ counts intersection  and self-intersection if $p, q$   even for orientable orbifolds, if $p$ and $q$ are larger than a constant depending on $x$ and $y$.

In \cite{etingof2006casimirs} it is proven that the center of the Goldman Lie algebra on closed surfaces is  generated by the trivial loop. In \cite{kabiraj2016center} it is proven that the center of the Goldman Lie algebra is generated by the trivial loop and  powers of loops parallel to boundary components in all surfaces.  

In Goldman's original work \cite{goldman_invariant_1986} there is also a Lie algebra structure on classes of undirected curves.  In \cite{chas2022lie}   it is proven that the center of this  Lie algebra of undirected curves is  generated by the class of the trivial loop and the classes of loops parallel to boundary components or punctures. Also it was conjectured (suggested by computer experiments) that if the bracket of two undirected curves is zero then these classes have  disjoint  representatives.

In  \cite{cahn2013intersections} it is shown that  the Andersen-Mattes-Reshetikhin bracket accurately counts intersection of two curves. (The Andersen-Mattes-Reshetikhin bracket  is a Poisson bracket defined on the vector space spanned by chord diagrams on a surface.  It can be viewed as a generalization of the Goldman bracket. )

In \cite{cahn2013generalization} a generalization of Turaev's cobracket is given (following the  Andersen-Mattes-Reshetikhin Lie algebra), and it is  proven that this cobracket counts self-intersection. 

See works of  Kawazumi and   Kuno for interesting geometric results about the Goldman Lie algbra expressed more algebraically \cite{kawazumi2012center, kawazumi2014logarithms,kawazumi2015}. 

\section*{Acknowledgements}
We would like to thank Dennis Sullivan for useful and enlightening conversations. The second author acknowledges the support from DST SERB Grant No.: SRG/2022/001210 and  DST SERB Grant No.: MTR/2022/000327. 

\section{Goldman Lie algebra}
Let $\Sigma$ be an oriented surface that admits a complete hyperbolic metric of finite area with (may be empty) geodesic boundary.   

There is a one-to-one correspondence between free homotopy classes of directed closed curves in $\Sigma$ and the set of all conjugacy classes in the fundamental group $\pi_1(\Sigma)$ of $\Sigma,$ both of which will be denoted by $\pi$.  We denote the free homotopy class of a directed  closed curve $x$ by $\la x\ra$. When two curves $x$ and $y$ intersect  in transversal double points, we denote it by $x\pitchfork y$.

Given two free homotopy classes of directed closed curves $\la x\ra $ and $\la y\ra$, choose directed representatives $x$ and $y$ respectively, such that $x\pitchfork y$. The \emph{Goldman Lie bracket of $\la x\ra$ and $\la y\ra$} is defined to be $$[\la x\ra,\la y\ra]=\sum_{P\in x\cap y}\epsilon_P\la x*_Py\ra$$ where $x\cap y$ denotes the set of all intersection points,  
$\epsilon_P$ denotes the sign of the intersection between $x$ and $y$ at $P$, $x*_Py$ denotes the loop product of ${x}$ and ${y}$ at $P$.

Given any ring $\K$, extend the bracket linearly to the free module $\K\pi$ generated by $\pi$ over $\K$. Goldman \cite{goldman_invariant_1986} proved that this bracket is well defined, skew-symmetric and satisfies the Jacobi identity on $\K\pi$. In other words, $\K\pi$  is a Lie algebra, called the \textit{the Goldman Lie algebra.} For convenience of notation we denote $[\la x \ra,\la y\ra]$ simply by $[x,y]$.

Given two closed curves $x$ and $y$, define the \emph{geometric intersection number} between $x$ and $y$ as the smallest number of crossings of representatives of $x$ and $y$ that intersect only in transversal double points. In symbols, 
$$i(x,y)=min\{x\cap y:x\in\la x\ra, y\in\la y\ra,  \text{where  $x\pitchfork y$ }.\}$$ 

If $x$ is a non-power closed curve, define the \emph{self-intersection number of $\la x\ra$} to be the smallest number of crossings of two different representatives of $\la x\ra$ that intersect only in transversal double points. In symbols, 
$$SL(x)=\frac{1}{2}min\{x_1\cap x_2:x_1,x_2\in\la x\ra,  \text{where $x_1\pitchfork x_2$ }\}$$

A curve $x$ is called \emph{simple} if it is not a power of another curve and  $SL(x)$ is zero. In this case we call the class $\la x\ra$ simple.  It is clear from the definition of Goldman bracket that if $\la x\ra$ has a simple representative then $[x^n,x]=0$ for all $n\in\Z$.

\section{Loop product of closed geodesics}
Consider a surface $\Sigma$  of negative Euler characteristic, with  a complete  hyperbolic metric. The length of a closed geodesic $x$ on $\Sigma$ is denoted by $\ell_x$. We identify the universal cover of $\Sigma$ with the   upper half plane $\H$

Given a geodesic $x$ on $\Sigma$, an \textit{$x$-segment} is a geodesic segment of a lift of $x$.  Geodesics and points  will be denoted by lowercase letters ($x,y,\dots$ ) and uppercase letters ($P, Q, \dots$) respectively. 

\subsection{The ``raw product" of the two intersecting closed geodesics}

If $x$ and $y$ are two closed directed geodesics in $\Sigma$ intersecting at a point $P$, denote  by $\Gp$  the  directed, closed, piecewise geodesic on $\Sigma$ that starts at $P$, travels once around $x$,  and then goes once around $y$. (This piecewise geodesic curve is an actual geodesic except for the two turns at $P$, both of the same directed angle. The direction of the turns is determined by the directions of the geodesics, see  Figure~\ref{fig:piecewise}, right).
\begin{figure}[ht]
	\centering
    \includegraphics[width=0.6\textwidth]{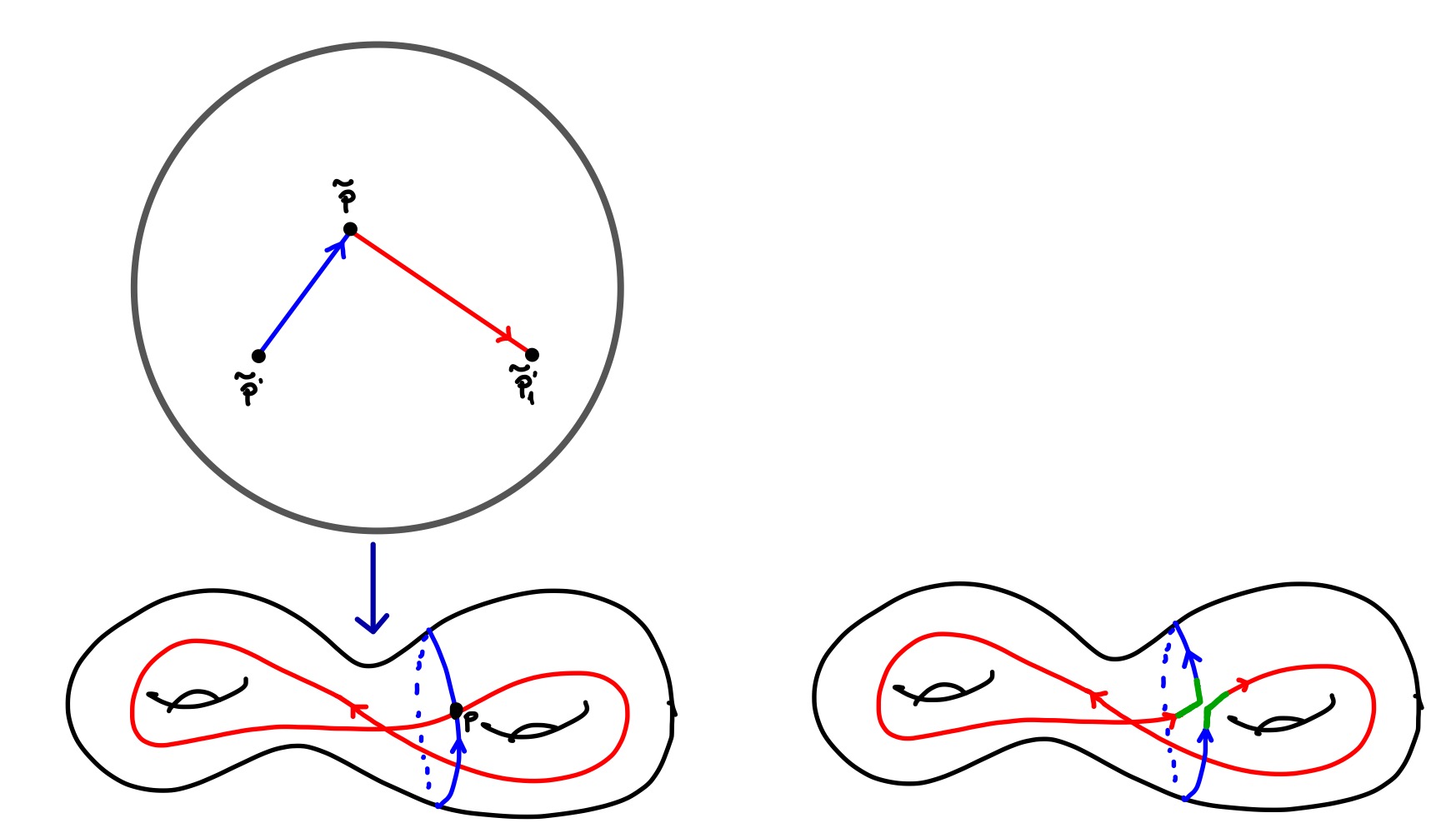}

	\caption{Left: One lap of a lift of $\Gp$ to the universal cover of $\Sigma$. Right: $\Gp$ with ``turning" marked in green. }.
	\label{fig:piecewise}
\end{figure}
\subsection{The piecewise geodesic on the class of the term and its lift, the zigzag curve}
Consider a lift $P'$ of the point  $P$ to the universal cover of $\Sigma$ and a full lift $\gp$  of $\Gp$ as  in Figure~\ref{fig:piecewise2}. 

 Observe that there are two geodesic segments emanating from each preimage of $P$ in $\gp$:  one which is a lap of a lift of $x$ and the other, a  lap of a lift of $y$.  Moreover, either  the $x$-segment arrives to each preimage of $P$ and the $y$ segment leaves or  the other way around. 

\begin{figure}[ht]
	\centering
	\includegraphics[width=1\textwidth]{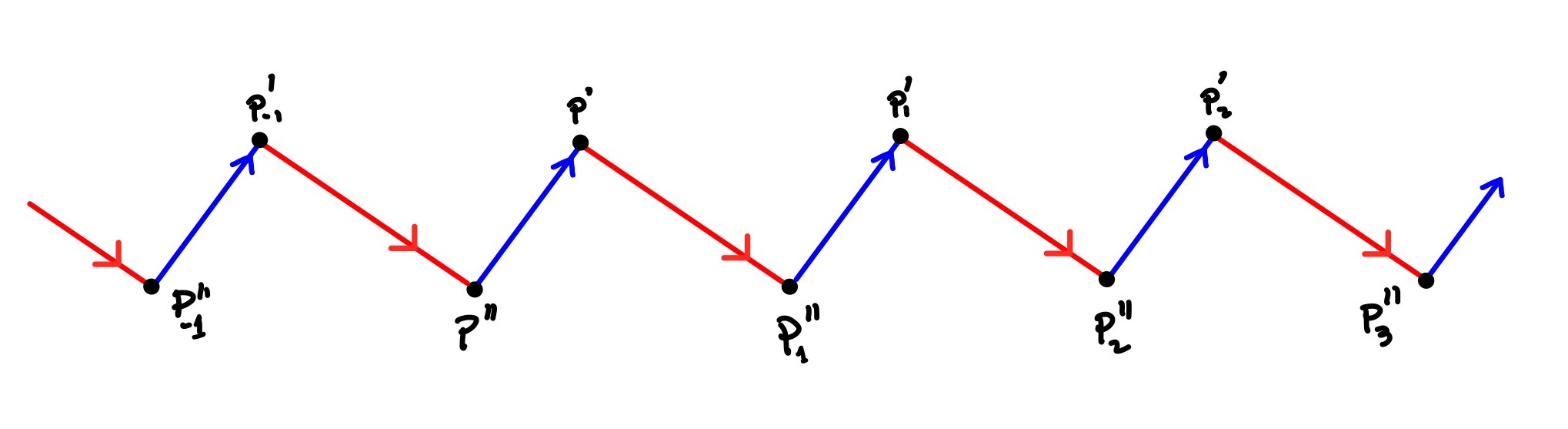}
	\caption{A portion of $\gp$. (The lifts of $x$ are are in blue, and the lifts of $y$ in red.}\label{fig:piecewise2}
\end{figure}

By possibly changing labels, one can assume that  there is an $x$-segment $P''P'$ arriving to $P'$ and a $y$-segment $P'P_1''$ leaving from $P'$ as in Figure~\ref{fig:piecewise2}.

Denote by $\Sp$ the midpoint of  $P''P'$ and by $\St$, the midpoint of $P'P_1''$  (see Figure~\ref{fig:3zigzag}). Note that the lengths of the segments $P''\Sp$ and $\Sp P'$  are each $\ell_x/2$,  (half of the length of the closed geodesic $x$). Similarly, the lengths of the segments   $P'\St$ and   $\St P''_1$ are each $\ell_y/2$.

\begin{figure}[ht]
	\centering
	\includegraphics[width=0.8\textwidth]{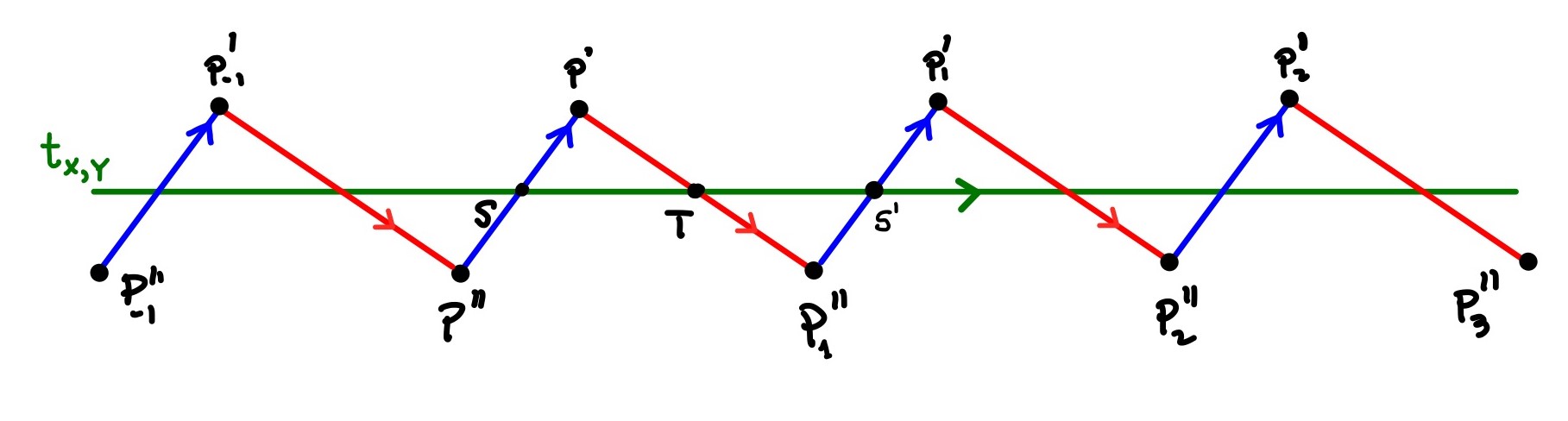}
	\caption{A portion of $\gp$ and the line $\emm$}
	\label{fig:3zigzag}
\end{figure}

Denote by $X$ (resp. $Y$) the hyperbolic transformation on the hyperbolic plane associated to   $x$ (resp. $y$) as elements of the fundamental group of $\Sigma$ based at $P$. Observe the axes of $X$ and $Y$ contain an $x$-segment and an $y$-segment respectively. 
Note  that $X\circ Y$ and $Y\circ X$ are conjugate. Hence, they  have have the same translation length and  the projection of their axes to the surface coincide. 

\begin{lemma}\label{lem:beardon} The following statements hold:
\begin{enumerate}
\item Both compositions $X\circ Y$ and $Y\circ X$ of the transformations $X$ and $Y$ are  hyperbolic.
\item The axis of $X \circ Y$ is the line $\emm$  through $\Sp$ and  $\St$, directed from $\Sp$ to $\St$. This line is a (full) lift of the closed geodesic in the free homotopy class of the closed piecewise geodesic $\Gp$ (see Figure~\ref{fig:3zigzag}). 
\item The   translation length $\ell_{xy}$ of $X \circ Y$ is the double of the distance   between $\Sp$ and $\St$ and is given by the formula $$\cosh{(l_{xy}/2)}=\cosh{(l_{x}/2)}\cosh{(l_{y}/2)}-\sinh{(l_x/2)}\sinh{(l_y/2)}\cos{\alpha},$$ where  $\alpha$ is the angle indicated in Figure~\ref{fig:beardon}.

\end{enumerate}
\end{lemma}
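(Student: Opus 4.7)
The strategy is to decompose each of $X$ and $Y$ as a product of two half-turns (rotations by angle $\pi$) with one factor centered at the common intersection point $P'$, so that the two copies of this factor cancel when $X$ and $Y$ are composed. Writing $\rho_Q$ for the half-turn centered at $Q \in \H$, I will invoke the classical identity that for distinct points $R, Q$, the composition $\rho_Q \circ \rho_R$ is the hyperbolic translation along the geodesic through $R$ and $Q$, of length $2\,d(R,Q)$, directed from $R$ to $Q$. (One checks this by comparing the image of a single point together with a tangent vector, since an orientation-preserving isometry of $\H$ is determined by these data.)

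Applying this identity with $R = S$, $Q = P'$ yields a hyperbolic translation along the axis $A$ of length $\ell_x$, directed from $S$ toward $P'$; since $\rho_{P'}\circ\rho_S(P'') = \rho_{P'}(P') = P' = X(P'')$ and both maps translate $A$ in the same direction, they must coincide, so $X = \rho_{P'}\circ\rho_S$. An analogous argument (with $R = P'$, $Q = T$) gives $Y = \rho_T\circ\rho_{P'}$. Substituting these into the composition in which the two copies of $\rho_{P'}$ end up adjacent, and using $\rho_{P'}\circ\rho_{P'} = \mathrm{id}$, one obtains
\[
    (\rho_T\circ\rho_{P'})\circ(\rho_{P'}\circ\rho_S) \;=\; \rho_T\circ\rho_S,
\]
which, by the classical identity once more, is the hyperbolic translation along the geodesic $\emm$ through $S$ and $T$, of length $2\,d(S,T)$, directed from $S$ toward $T$. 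This proves parts (1) and (2), establishing $\ell_{xy} = 2\,d(S,T)$; that $\emm$ is a full lift of the closed geodesic in the class of $\Gp$ follows because this composition represents the conjugacy class of $\Gp$ in $\pi_1(\Sigma)$.

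For part (3), I would apply the hyperbolic law of cosines to the geodesic triangle $SP'T$: its two sides emanating from $P'$ have lengths $\ell_x/2$ and $\ell_y/2$, the opposite side $ST$ has length $\ell_{xy}/2$ by the previous step, and the angle at $P'$ is the $\alpha$ of Figure~\ref{fig:beardon}. Substituting into $\cosh(ST) = \cosh(SP')\cosh(P'T) - \sinh(SP')\sinh(P'T)\cos\alpha$ yields the stated formula. The main point of care is the bookkeeping of orders and directions in the half-turn decompositions: reversing the two factors inside $X = \rho_{P'}\circ\rho_S$ would give $X^{-1}$ instead of $X$, and composing $X$ and $Y$ in the wrong order would prevent the central factors from cancelling. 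Both choices are pinned down by the prescribed directions of travel along the zigzag, namely from $P''$ to $P'$ along $A$ followed by $P'$ to $P_1''$ along $B$.
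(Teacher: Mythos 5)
Your proposal is correct and follows essentially the same route as the paper: decompose $X$ and $Y$ into half-turns sharing the factor $R_{P'}$ (the paper cites \cite[Theorem 7.38.6]{beardon2012geometry} for this), cancel the middle factors to get a translation along the line through $S$ and $T$ of length $2\,d(S,T)$, and finish part (3) with the hyperbolic law of cosines on the triangle $SP'T$. Your explicit attention to the order of the half-turn factors is if anything slightly more careful than the paper's write-up, which computes $Y\circ X$ and then passes to $X\circ Y$ by conjugation.
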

\begin{proof} 
 The proof of 1. and 2. follow  from  \cite[Theorem 7.38.6]{beardon2012geometry} and we include it for completeness.
For each point $U$ on the hyperbolic plane, we denote by $R_{U}$ the anticlockwise rotation  about $U$ of angle $\pi$. 
\begin{figure}[ht]
	\centering
	\includegraphics[width=0.35\textwidth]{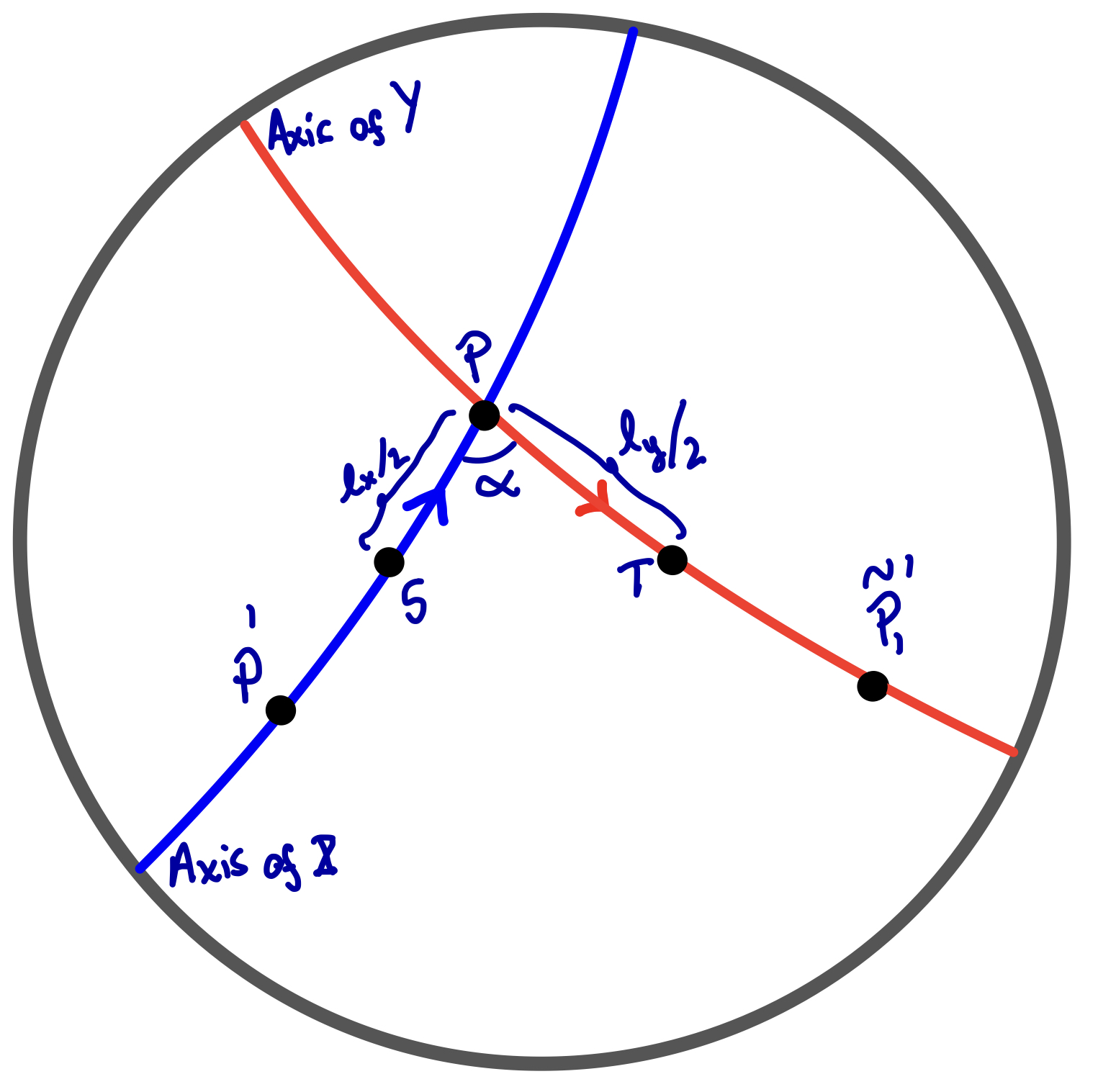}
	\caption{Product of two hyperbolic transformations whose axes intersect}
	\label{fig:beardon}
\end{figure} 
 
Observe that the axis of $X$ contains the segment $\Sp'\Sp$ (in blue in Figure~\ref{fig:beardon}) and the axis of $Y$ contains the segment $\Sp\Sp_1$ (in red in Figure~\ref{fig:beardon}). Also, $X=R_{P} R_{\Sp}$ and $Y=R_{T} R_{P}$. Therefore, $Y \circ X = R_{S} R_{T}$ and the axis of  $Y \circ X$ is the line from $\Sp$ to $T$. By the Cosine Rule (see \cite[Section 1.12]{beardon2012geometry}. Since $X \circ Y$ and $Y \circ X$ are conjugate by $X$, if one of the compositions is hyperbolic so is the other.

\end{proof}

\begin{figure}[ht]
	\centering
	\includegraphics[width=0.8\textwidth]{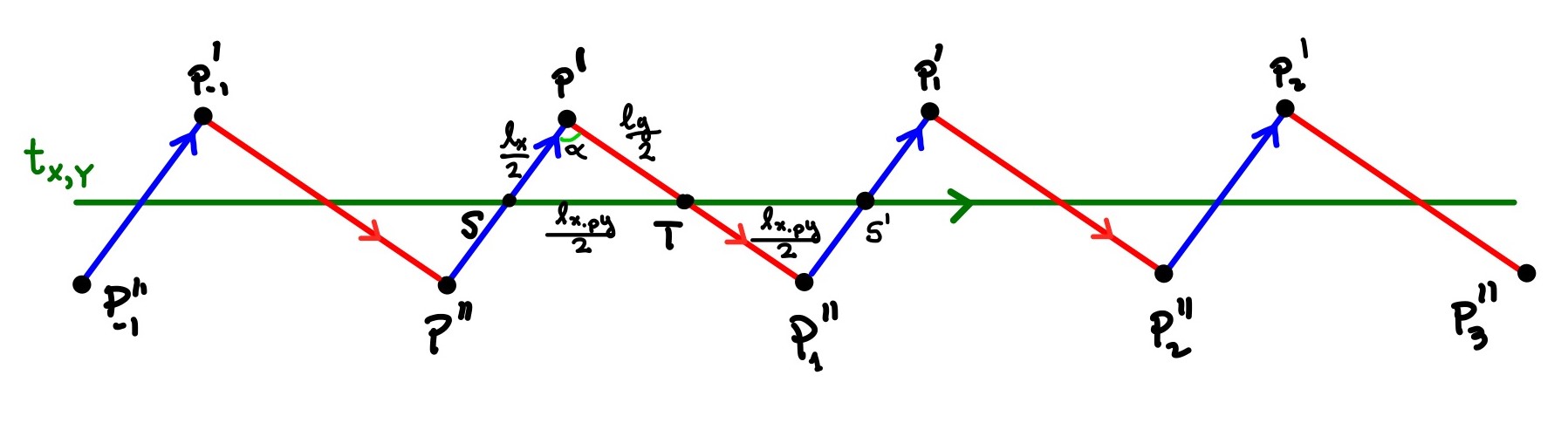}

\caption{Lengths of segments in Lemma~\ref{lem:beardon} }
	\label{fig:lengths}
\end{figure}

\subsection{The midpoint of a closed geodesic with respect to a point}

For each closed geodesic  $z$ in $\Sigma$ and each point $R$ in $z$, the \emph{midpoint of $z$ with respect to $R$, denoted by $R_z$} is the point of $z$ such that the lengths of both arcs of $z$ from $R$ to $R_z$ are equal. 

The next corollary follows from Lemma~\ref{lem:beardon}.

\begin{corollary}\label{cor:unique}   The unique closed directed geodesic  in the free homotopy class of $\Gp$ (which is also the unique geodesic in the class of the loop product of $x$ and $y$ based at $P$) is the projection (by the covering map) of the axis of the composition $X \circ Y$. Moreover, this closed  geodesic 
intersects $x$ in $P_x$, the midpoint of $x$ with respect to $P$, and $y$  in $P_y$ the midpoint of $y$ with respect to $P$. (See Figure~\ref{fig:unique})
\end{corollary}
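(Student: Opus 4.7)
The first assertion will follow essentially by direct reading of Lemma~\ref{lem:beardon}, since part~(2) already identifies the axis of $X\circ Y$ as a full lift of the closed geodesic in the free homotopy class of $\Gp$. I would begin by invoking the standard fact that on a hyperbolic surface each free homotopy class of a non-trivial closed curve contains a unique closed geodesic (as the projection of the axis of any hyperbolic deck transformation representing that class). Since $X\circ Y$ is the deck transformation based at $P'$ corresponding to the element of $\pi_1(\Sa,P)$ represented by the loop product $x *_P y$, its axis projects to precisely the closed geodesic in the class of $\Gp$, and by uniqueness this is the only such geodesic. This also matches the interpretation as the loop product class based at $P$.

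For the second assertion, the plan is to analyze how the special points $\Sp$ and $\St$ on the axis $\emm$ project to $\Sa$. Recall from the set-up that $P''P'$ is a full $x$-segment connecting two preimages of $P$, hence its length equals $\ell_x$; by definition $\Sp$ is its midpoint, so each of the arcs $P''\Sp$ and $\Sp P'$ has length $\ell_x/2$. When projected to $\Sa$, both arcs cover the two sub-arcs of $x$ obtained by cutting $x$ at $P$, giving two arcs of equal length from $P$ to the projection of $\Sp$. This is exactly the defining property of $P_x$, so $\Sp$ projects to $P_x$. The analogous argument with the $y$-segment $P'P_1''$ of length $\ell_y$ shows that $\St$ projects to $P_y$.

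Finally, since $\Sp$ and $\St$ both lie on $\emm$ (indeed, they determine it), their projections $P_x$ and $P_y$ lie on the projection of $\emm$, namely the closed geodesic identified in the first step. Because $P_x$ lies on $x$ and $P_y$ lies on $y$ by definition, these are genuine intersection points of the loop-product geodesic with $x$ and $y$ respectively, completing the proof.

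I do not anticipate a serious obstacle: the corollary is mostly a repackaging of Lemma~\ref{lem:beardon}, and the only point that requires real verification is the identification of the projections of $\Sp$ and $\St$ with the midpoints $P_x$ and $P_y$. The subtlety, such as it is, lies in carefully distinguishing between the piecewise geodesic $\Gp$ (which passes through $P$ itself) and the actual closed geodesic in its class (which instead meets $x$ and $y$ at the midpoints with respect to $P$); but once one keeps track of lengths on the $x$-segment and $y$-segment, the identification is immediate.
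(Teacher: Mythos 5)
Your proposal is correct and follows essentially the same route as the paper, which simply states that the corollary follows from Lemma~\ref{lem:beardon} (with the remark that $S$ projects to $P_x$ and $T$ to $P_y$). Your elaboration --- uniqueness of the geodesic in a free homotopy class, plus the observation that $S$ and $T$ bisect full lifts $P''P'$ and $P'P_1''$ of $x$ and $y$ and hence project to the midpoints with respect to $P$ --- is exactly the intended argument.
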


\begin{remark}
In the notation of Corollary \ref{cor:unique}, in Figure \ref{fig:lengths}, $S$ projects to $P_x$ and $T$ projects to $P_y$.  

\end{remark}

\begin{figure}[ht]
	\centering
	\includegraphics[width=0.5\textwidth]{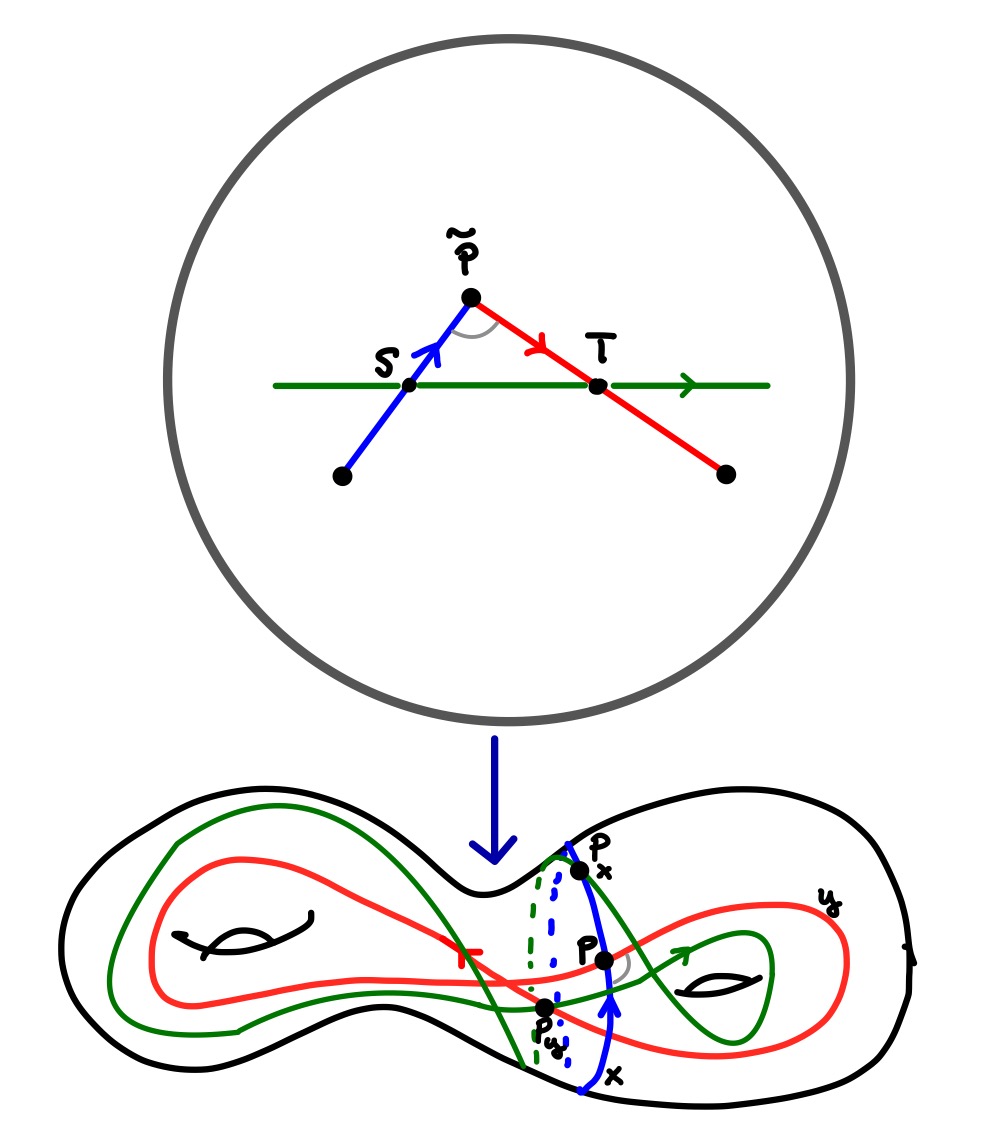}
	\caption{Product, geodesic and lift}
	\label{fig:unique}
\end{figure}

\section{Study of the term of the Goldman bracket associated with  a given intersection point of two closed directed geodesics}

\subsection{Symmetries of two cancelling terms}

Throughout this section, we will assume  that $P$ and $Q$ are two intersection  points of $x$ and $y$ such that  the corresponding terms of the Goldman bracket cancel. In particular, the closed, directed, piecewise geodesics $\Gp$ and  $\Gq$ are freely homotopic. Then there exists a lift $P'$ of $P$ and a lift $Q'$ of $Q$ such that the following happens. There is a full lift $\gq$ of $\Gq$ with the same endpoints as $\gp$ and the same endpoints as the line $\emm$. Since $\emm$ is also the unique geodesic in the class of  $\Gq$,  $\gq$  zigzags around $\emm$ analogously as  $\gp$. 

Since the terms corresponding to $P$ and $Q$ in the Goldman bracket cancel each other,  the signs associated with $P$ are $Q$ are distinct and the local picture  around $P$ and $Q$  is as in Figure~\ref{fig:signs}, possibly swapping $P$ and $Q$ (since the picture is local, the same ideas apply to the lifts $P'$ and $Q'$.)

\begin{figure}[ht]
	\centering
	\includegraphics[width=0.6\textwidth]{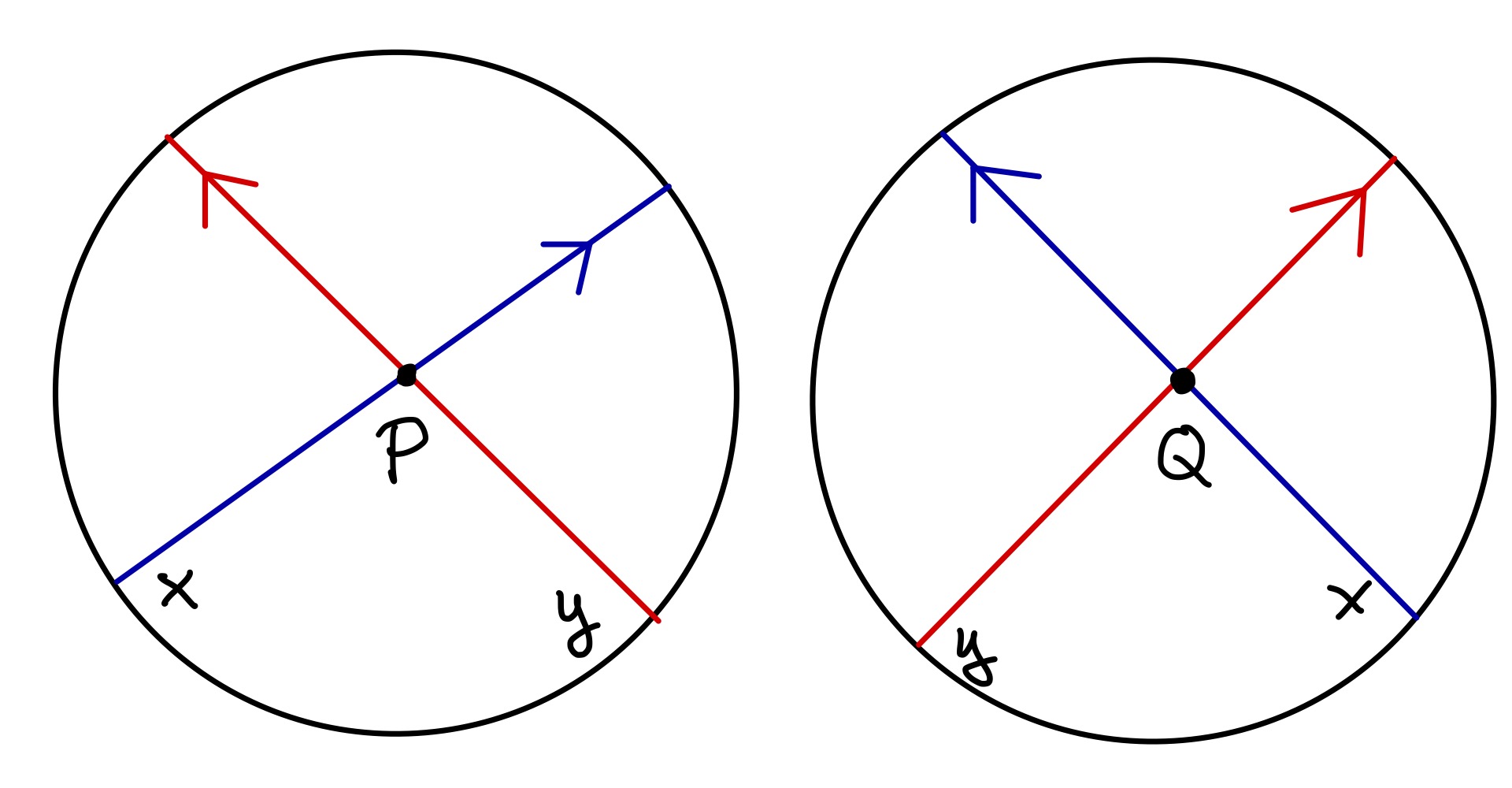}
	\caption{Signs determined by the  local picture around $P$ and $Q$.}
	\label{fig:signs}
\end{figure}

If $T$ is an intersection point of two directed lines (possibly a self-intersection point)  the \emph{forward angle at $T$}, denoted by $\overrightarrow{T}$  is the angle formed by the two forward directions of the lines.

The next corollary follows from Lemma~\ref{lem:beardon}.
\begin{corollary}{\label{cor:angle}}  The forward angles of at $P$ and $Q$  are congruent. In symbols, $\overrightarrow{P}=\overrightarrow{Q}.$
\end{corollary}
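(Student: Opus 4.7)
The plan is to apply the translation length formula from Lemma~\ref{lem:beardon}(3) at the two intersection points $P$ and $Q$ and then use injectivity of cosine to conclude.

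First, I would observe that the cancellation of the Goldman-bracket terms at $P$ and $Q$ implies, in particular, that the directed closed piecewise geodesics $\Gp$ and $\Gq$ are freely homotopic. By Corollary~\ref{cor:unique}, each is in the free homotopy class of a unique closed directed geodesic, namely the projection of the axis of the composition $X\circ Y$ (for $P$) and of the corresponding composition of hyperbolic transformations based at $Q$. Since these two axes project to the same closed geodesic, the associated compositions share a common translation length, which I denote by $\ell_{xy}$.

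Next, I would write out the length formula from part (3) of Lemma~\ref{lem:beardon} at each of the two crossings. Denoting temporarily the angles at $P$ and $Q$ appearing in the formula by $\alpha_P$ and $\alpha_Q$, I get
\begin{align*}
\cosh(\ell_{xy}/2) &= \cosh(\ell_x/2)\cosh(\ell_y/2) - \sinh(\ell_x/2)\sinh(\ell_y/2)\cos\alpha_P,\\
\cosh(\ell_{xy}/2) &= \cosh(\ell_x/2)\cosh(\ell_y/2) - \sinh(\ell_x/2)\sinh(\ell_y/2)\cos\alpha_Q.
\end{align*}
Equating the right-hand sides and dividing by $\sinh(\ell_x/2)\sinh(\ell_y/2)>0$ gives $\cos\alpha_P = \cos\alpha_Q$.

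The remaining step is to identify the angles $\alpha_P$ and $\alpha_Q$ appearing in the formula with the forward angles $\overrightarrow{P}$ and $\overrightarrow{Q}$ as defined just before the corollary. Inspecting Figure~\ref{fig:beardon}, the angle $\alpha$ is precisely the angle formed at the crossing by the forward directions of the axis of $X$ and the axis of $Y$, which is by definition the forward angle at that intersection. Both $\overrightarrow{P}$ and $\overrightarrow{Q}$ lie in $(0,\pi)$ since the crossings are transverse, so cosine is injective there and we conclude $\overrightarrow{P} = \overrightarrow{Q}$.

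The only delicate point is the identification of the formula's $\alpha$ with the newly introduced notion of forward angle; once that is settled from the figure, the corollary reduces to a one-line application of Lemma~\ref{lem:beardon}(3).
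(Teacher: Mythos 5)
Your proof is correct and is essentially the argument the paper intends: the paper gives no explicit proof, stating only that the corollary ``follows from Lemma~\ref{lem:beardon},'' and the natural reading is precisely your use of the cosine formula in part (3) together with the equality of the translation lengths $\ell_{xy}$ forced by the free homotopy $\Gp \simeq \Gq$. Your care in identifying the formula's $\alpha$ with the forward angle (rather than its supplement) and in noting injectivity of cosine on $(0,\pi)$ fills in exactly the details the paper leaves implicit.
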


\begin{lemma}\label{lem:reflections} There exist two lines $\U$ and $\V$ perpendicular to the line  $\emm$ (the axis of $X \circ Y$) such that

\begin{enumerate}
\item The reflection about  $\U$ and the reflection about $\V$ swap $\gp$ and $\gq$.
\item The distance between  $\U$ and $\V$ is half of $\ell_{xy}$, the translation length of $X \circ Y$.
\item If $\ell_x = \ell_y$ one of the  lines, say $\U$ intersects an $x$-segment of $\gp$ and an $x$-segment of $\gq$(thus the lift of a self-intersection point of $x$) and the other  line, $\V$, intersects a $y$-segment of $\gp$ and a $y$-segment of $\gq$ (thus the lift of a self-intersection point of $y$). See Figure~\ref{fig:cases}, (2).
\item If $\ell_x \le \ell_y$, then there are two possibilities: one is as in the previous item (3), and the other is the following: 
Each of the  lines, $\U$  and $\V$ intersect an $x$-segment of $\gp$ and an $x$-segment of $\gq$(thus the lift of a self-intersection point of $x$). See Figure~\ref{fig:cases}, (4). 
\end{enumerate}
Moreover, all possible configurations of  $\U$, $\V$, $\gp$ and $\gq$ are those in Figure~\ref{fig:cases}. 
\end{lemma}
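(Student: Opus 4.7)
The plan is to exploit the symmetry imposed by the cancellation of the $P$- and $Q$-terms in the Goldman bracket. Both $\gp$ and $\gq$ are piecewise-geodesic zigzags sharing the common axis $\emm$ and the same two endpoints at infinity. By Lemma~\ref{lem:beardon} applied at both $P$ and $Q$, the midpoints of the $x$- and $y$-segments of each zigzag lie on $\emm$, alternating in type and evenly spaced at intervals of $\ell_{xy}/2$; moreover, each zigzag is invariant under the translation $X\circ Y$ along $\emm$ by length $\ell_{xy}$.

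To produce the first reflection line $\U$, I would argue that the opposite-sign condition $\epsilon_P=-\epsilon_Q$, together with the equality of forward angles at $P'$ and $Q'$ (Corollary~\ref{cor:angle}), forces $\gp$ and $\gq$ to be related by an orientation-reversing isometry of $\H$ that sends $x$-segments of $\gp$ to $x$-segments of $\gq$ and $y$-segments to $y$-segments. Any such isometry must preserve the common axis $\emm$ setwise, since $\emm$ is intrinsically determined by either zigzag as the geodesic it oscillates around. Among orientation-reversing isometries preserving $\emm$, the sign data and segment-type constraints imposed by the local pictures of Figure~\ref{fig:signs} single out a reflection $R_\U$ across some line $\U$ perpendicular to $\emm$.

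For the second reflection line $\V$, I would compose $R_\U$ with the translation $X\circ Y$. Because both $\gp$ and $\gq$ are $(X\circ Y)$-invariant, this composition again swaps them. By the standard hyperbolic-geometry identity that a reflection across a perpendicular to $\emm$, followed by a translation along $\emm$ of length $d$, is itself a reflection across a perpendicular to $\emm$ shifted by $d/2$, this composition is the reflection $R_\V$ in a line $\V$ perpendicular to $\emm$ at distance $\ell_{xy}/2$ from $\U$. This yields items (1) and (2).

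For items (3) and (4) the crucial observation is that if $R_\U$ maps a segment $\sigma$ of $\gp$ to a segment $\sigma'$ of $\gq$ and $\sigma$ meets $\U$, then $\sigma$ and $\sigma'$ meet $\U$ at the same point (the fixed point of $R_\U|_\sigma$); hence $\sigma\cap\sigma'$ is nonempty and, when $\sigma,\sigma'$ are both $x$-segments (respectively both $y$-segments), this intersection projects to a self-intersection point of $x$ (respectively of $y$) in $\Sigma$. The classification then reduces to tracking the positions of $\U$ and $\V$ on $\emm$ relative to the alternating midpoint pattern of $\gp$: the $x$-midpoints are $\ell_{xy}$ apart with $y$-midpoints halfway between, while $\U$ and $\V$ themselves are $\ell_{xy}/2$ apart. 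When $\ell_x=\ell_y$, the $x$- and $y$-segments occupy congruent neighborhoods of their midpoints on $\emm$, and the reflective symmetry forces $\U$ and $\V$ to land on segments of opposite types, giving Figure~\ref{fig:cases}(2). When $\ell_x<\ell_y$, the shorter $x$-segments leave additional room for both $\U$ and $\V$ to land on $x$-segments, giving the supplementary configuration of Figure~\ref{fig:cases}(4). I expect the main technical obstacle to be this last case analysis: verifying that no other configurations are geometrically realizable requires carefully comparing the angular ranges along $\emm$ occupied by the $x$- and $y$-segments of both zigzags and ruling out mixed placements that would violate the alternation of midpoints.
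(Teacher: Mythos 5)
Your proposal follows essentially the same route as the paper: a reflection $R_{\U}$ in a perpendicular to $\emm$ swapping $\gp$ and $\gq$ (forced by the opposite signs and the congruent forward angles of Corollary~\ref{cor:angle}), a second reflection $R_{\V}$ obtained by factoring the translation $X\circ Y$ as a product of two reflections in perpendiculars at distance $\ell_{xy}/2$, self-intersection points read off from where $\U$ and $\V$ meet matching segments, and a case analysis driven by the lengths of the projections of the $x$- and $y$-segments onto the axis. The only cosmetic difference is that the paper pins down $\U$ concretely as the perpendicular to $\emm$ through the midpoint of $P'Q'$ (with $Q'$ chosen on the same side of $\emm$ as $P'$), where you argue abstractly that the swapping isometry must be such a reflection; both treatments are at the same level of rigor.
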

\begin{proof}

The idea of the proof is the following: the two piecewise geodesics $\gp$ and $\gq$ zigzag around the axis $\emm$. Since the corresponding terms have opposite signs, and the turning angles and corresponding pairs of segments are congruent, $\gp$ is the reflection of $\gq$ about $\U$,  a line perpendicular to the axis $\emm$. (In fact, because of the translation symmetries of the figure, there are infinitely many lines with that property). Also, both piecewise geodesics $\gp$ and $\gq$ are invariant under $X \circ Y$, of translation length $\ell_{xy}$. This translation can be written as the composition of a reflection about $\U$ and a reflection about $\V$ another line perpendicular to $t_{xy}$, at distance $\ell_{xy}$ of $\U$. Thus, the union of both lines is invariant under reflection about $\V$, which implies that $\V$ swaps $\gp$ and $\gq$.

\begin{figure}[ht]
	\centering
	\includegraphics[width=0.8\textwidth]{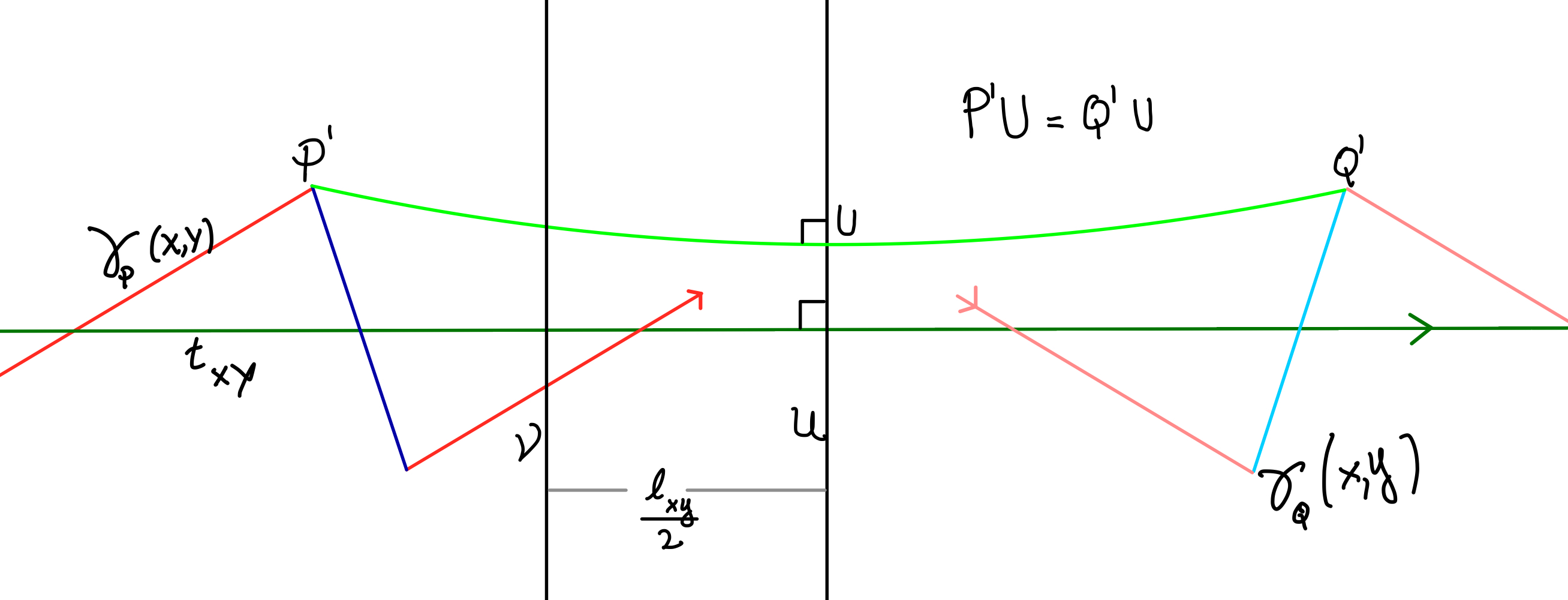}
\caption{Proof of Lemma~\ref{lem:reflections}.} The piecewise geodesics are not shown overlapping for clarity.
	\label{fig:symmetry}
\end{figure}
The formal proof is below.

Consider a preimage $Q'$ of the point $Q$ on $\gq$ which is on the same side as $P'$ with respect to  $\emm$, the axis of the transformation $X \circ Y$ .
Observe the following:
\begin{itemize}
\item The forward angles at  all the lifts of $P$ and $Q$ are congruent by Corollary~\ref{cor:angle}.
\item The length of pairs of corresponding $x$-segments (respectively $y$-segments)  forming the piecewise geodesics, drawn in blue and light blue  (respectively  red and light red) in Figure~\ref{fig:symmetry}, are  congruent. 
\item For each of the two piecewise geodesics, the length of the segment between two consecutive intersection points with the axis is equal to $\ell_{xy}/2$ (half the translation length of $X \circ Y$) and therefore they are  congruent (by Lemma~\ref{lem:beardon})
\end{itemize}

Let $U$ be the midpoint of the segment $P'Q'$ and let $\U$ be the perpendicular to $\emm$ passing through $U$. 
The   reflection $\rho_\U$ about $\U$ swaps $\gp$ and $\gq$.

On the other hand, by Lemma~\ref{lem:beardon}, the transformation  $X \circ Y$ acts as a translation of distance $\ell_{xy}$ on the line $\emm$. Consider now a line $\V$, also perpendicular to  $\emm$, at distance $\ell_{xy}/2$ from $\U$ and before $\U$ in the direction of  $X \circ Y$ . Then  $X \circ Y$  can be written as $\rho_{\U}\circ\rho_{\V}$. Since the union of the piecewise geodesics $\gp \cup \gq$ is invariant under  $\rho_{\U}$ and under $\rho_{\U}\circ\rho_{\V}$, then $\gp \cup \gq$ is also invariant under $\rho_{\V}$.

\begin{figure}[ht]
	\centering
	\includegraphics[width=0.4\textwidth]{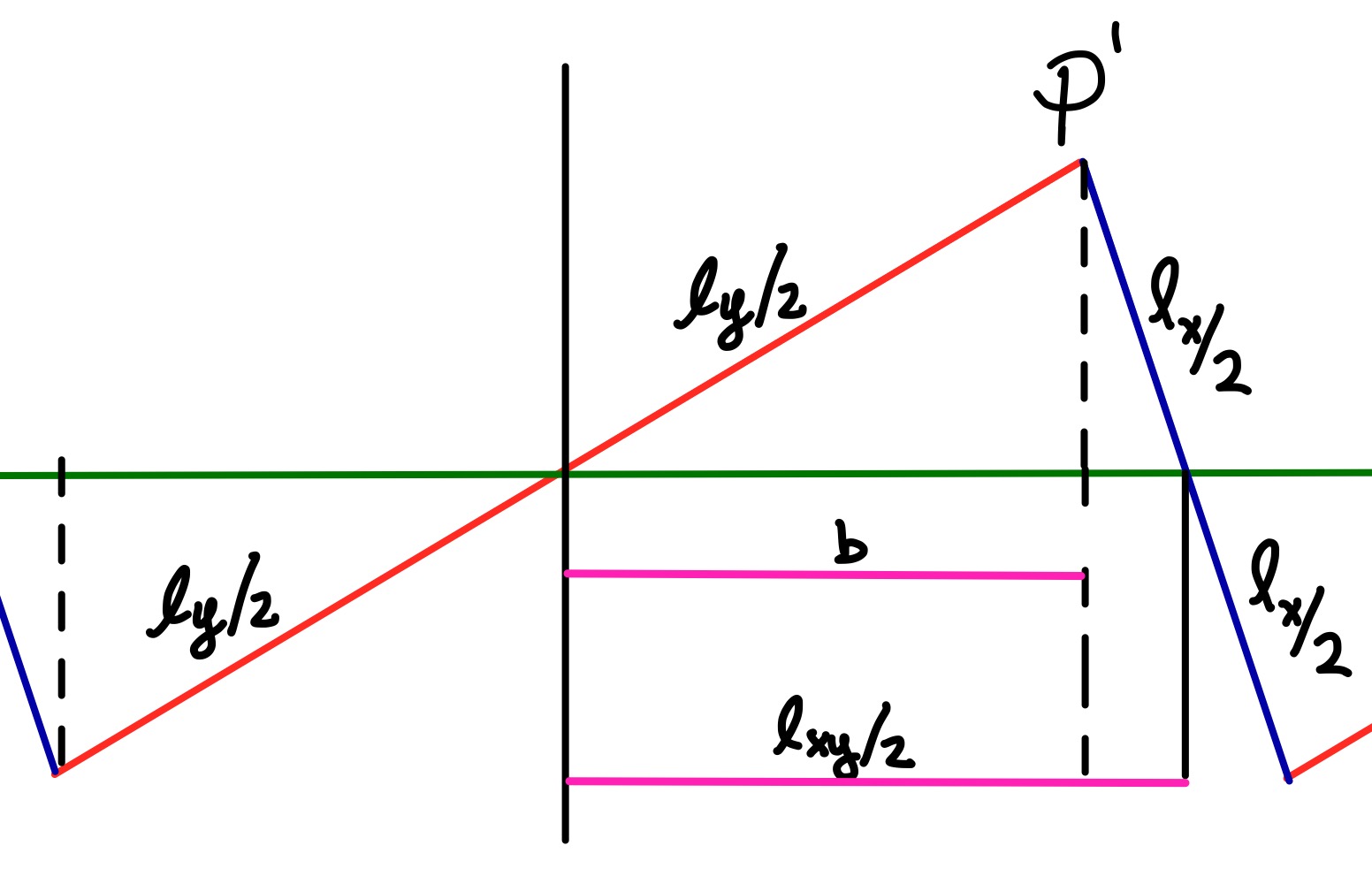}
\caption{Proof of Lemma~\ref{lem:reflections}}.
	\label{fig:b}
\end{figure}
The existence of self-intersection points on $x$ and $y$ follows from the following facts:

\begin{itemize}
\item $\U$ and $\V$ are at distance $\ell{xy}/2.$
\item the distance between the intersection of $\emm$ and  two consecutive laps of $x$ is $\ell_{xy}$.
\item If $\ell_y > \ell_x$, the  segment resulting from  the projection of a $y$-segment to the axis $\emm$ is smaller than $\ell_{xy}/2$. (See Figure~\ref{fig:b}, where the length of the projection of a $y$-segment is $2b$)
\item If $\ell_x = \ell_y$, the  segment resulting from  the projection of either an $x$-segment or a $y$-segment to the axis $\emm$ is  $\ell_{xy}/2$.
\end{itemize}

\begin{figure}[ht]
	\centering
	\includegraphics[width=1\textwidth]{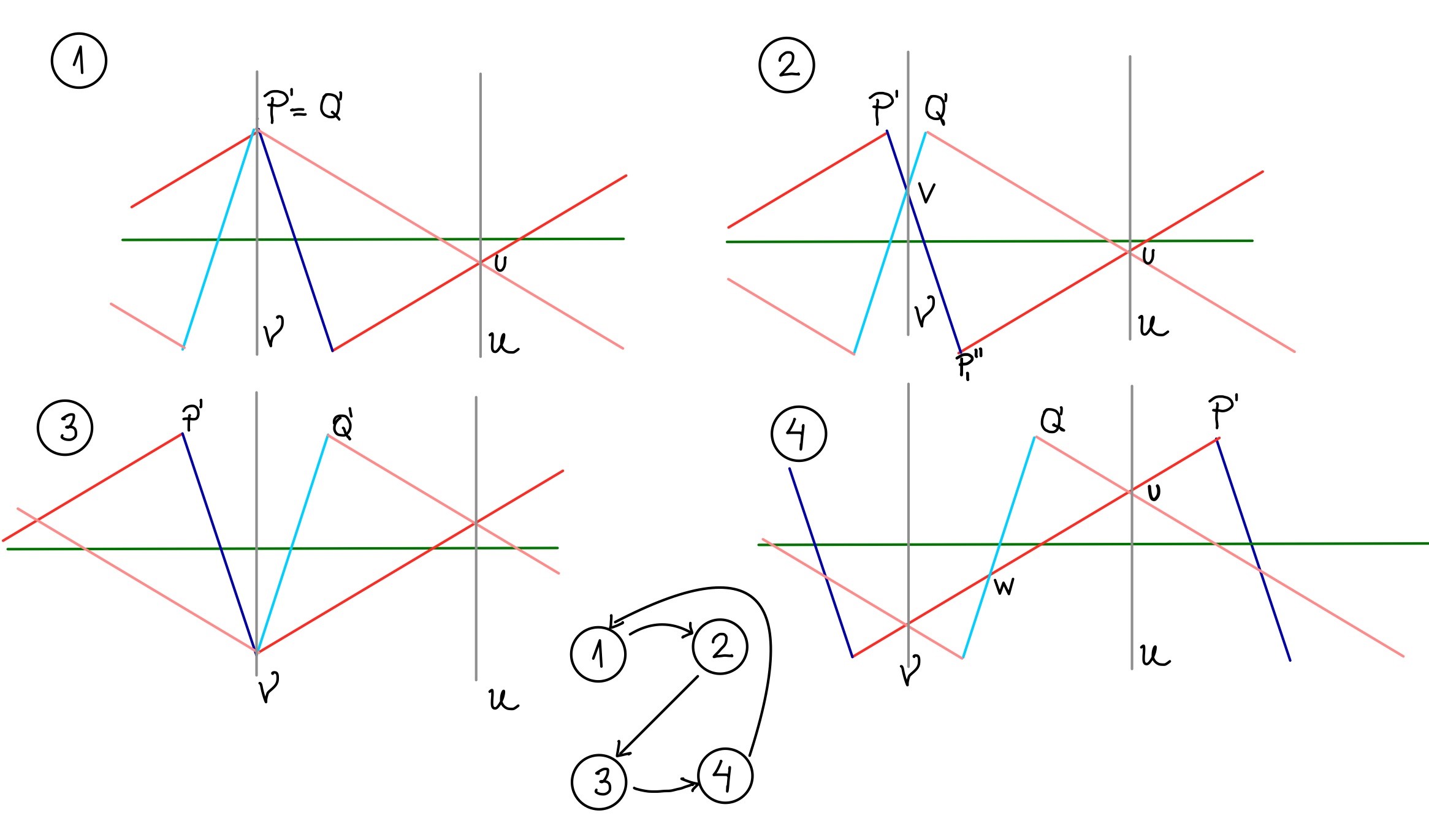}
\caption{Relative positions of $\U$, $\V$, $\gp$ and $\gq$}
	\label{fig:cases}
\end{figure}

All the possible configurations of $\gp$ and $\gq$ can be obtained  as follows: Choose any possible configuration of  $\gp, \gq$ and $\emm$ (In Figure~\ref{fig:cases} we started with the case (1) where $P'$ and $Q'$ coincide). Fix  $\gp$ and $\emm$ and then slide $\gq$ along $\emm$ in any direction up to length $\ell_{xy}$. (In our example, until $P'$ coincides again with a lift of $Q$).   
There are two generic  configurations, shown in Figure~\ref{fig:cases} (2) and (4): either $\U$ and $\V$ intersect the piecewise geodesics in the lift of a self intersection point of one of the geodesics (the longest one) or $\U$ and $\V$ intersect the piecewise geodesics in the lift of one self-intersection point of either of the geodesics.  (The extreme cases shown in Figure~\ref{fig:cases}(1) and (3), satisfy both descriptions). 
\end{proof}

\begin{lemma}\label{lem:lx>ly} If $\ell_x \le \ell_y$ then one of the following hold: 
\begin{enumerate}
    \item There exist an intersection point $W$ of $x$ and $y$
    such that $\overrightarrow{W} < \overrightarrow{P}$ (Figure~\ref{fig:cases}, (4)).
    
    \item There exist a self-intersection point $V$ of $x$ and a self-intersection point $U$ of $y$  such that  either
    $\overrightarrow{U} < \overrightarrow{P}$, or 
    $\overrightarrow{V} < \overrightarrow{P}$. (Figure~\ref{fig:cases}, (2))
\end{enumerate}

\end{lemma}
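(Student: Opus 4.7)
The plan is to invoke Lemma~\ref{lem:reflections} to reduce to the two configurations shown in Figure~\ref{fig:cases}(2) and (4), and then to extract the desired strict angle comparisons from the hyperbolic triangle $\triangle P'\Sp\St$ studied in Lemma~\ref{lem:beardon}. Let $\alpha, \beta, \gamma$ denote the interior angles of this triangle at $P'$, $\Sp$, $\St$, respectively. By tracking orientations, the $x$-segment arriving at $P'$ and the $y$-segment leaving from $P'$ lie on opposite sides of $\emm$, which identifies $\overrightarrow{P}$ with $\pi-\alpha$ (possibly after complementation). The angles $\beta$ and $\gamma$ are precisely the angles the $x$-line and the $y$-line make with $\emm$ at the midpoints $\Sp$ and $\St$. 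Since $\triangle P'\Sp\St$ has strictly positive area, the Gauss--Bonnet theorem gives the strict defect $\alpha+\beta+\gamma<\pi$, which rewrites as $\overrightarrow{P}>\beta+\gamma$.

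In case (2) of Figure~\ref{fig:cases}, the line $\U$ meets two $x$-segments (one on $\gp$ and its mirror image on $\gq$) at a point $V$ projecting to a self-intersection of $x$; analogously, $\V$ gives a point $U$ projecting to a self-intersection of $y$. The reflection $\rho_{\U}$ swapping $\gp$ and $\gq$ forces the two $x$-lines through $V$ to make equal angles with $\emm$, namely $\beta$, so once the forward orientations are identified this yields $\overrightarrow{V}=2\beta$. Similarly $\overrightarrow{U}=2\gamma$. The triangle inequality $\overrightarrow{P}>\beta+\gamma\ge 2\min(\beta,\gamma)$ then produces strict inequality $\overrightarrow{V}<\overrightarrow{P}$ or $\overrightarrow{U}<\overrightarrow{P}$, as required.

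In case (4), both $\U$ and $\V$ meet $x$-segments only, but I claim that a $y$-segment of $\gp$ between its two adjacent $x$-segments must cross an $x$-segment of $\gq$ at a point $W'$ projecting to an intersection $W\in x\cap y$: this is forced by the fact that both piecewise geodesics have the same endpoints on the circle at infinity, so a $y$-segment of $\gp$ which spans an interval of length $\ell_{xy}/2$ along $\emm$ must meet the translated $x$-segment of $\gq$ inside that interval by an intermediate-value argument along the axis $\emm$. The forward angle $\overrightarrow{W}$ is then computed in terms of $\beta$, $\gamma$ and the (non-extremal) horizontal offset between $\gp$ and $\gq$, and I would show $\overrightarrow{W}<\overrightarrow{P}$ via a hyperbolic defect argument on the sub-triangle whose vertices are $W'$ and two appropriate points on $\emm$.

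The main obstacle is the bookkeeping of forward orientations at $P$, $Q$, $V$, $U$, and $W$. The cancellation of Goldman terms at $P$ and $Q$ forces opposite signs of crossing (Corollary~\ref{cor:angle}), and the reflections $\rho_{\U}, \rho_{\V}$ are orientation-reversing isometries; identifying forward $x$- and $y$-directions of $\gp$ with the corresponding forward directions of $\gq$ under these reflections (and verifying that the reflected rays are indeed the forward and not the backward rays of the images) is the technical heart of the argument. Once orientations are pinned down correctly, the strict angle inequalities follow cleanly from the positive hyperbolic area of $\triangle P'\Sp\St$.
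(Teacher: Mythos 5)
Your reduction to the two configurations of Figure~\ref{fig:cases} via Lemma~\ref{lem:reflections} matches the paper, and the identification $\overrightarrow{P}=\pi-\alpha$ together with the defect inequality $\alpha+\beta+\gamma<\pi$ for the triangle $P'\Sp\St$ is correct. But your treatment of case (2) contains a genuine error: the claimed equalities $\overrightarrow{V}=2\beta$ and $\overrightarrow{U}=2\gamma$ are Euclidean facts that fail in the hyperbolic plane. The two $x$-lines through $V$ are mirror images under $\rho_{\U}$ and each meets $\emm$ at angle $\beta$, but together with $\emm$ they bound an isosceles triangle of positive area, so the angle they make at $V$ is \emph{not} $2\beta$ (nor $\pi-2\beta$); it differs from the Euclidean value by the area of that triangle, and depending on which of the two angles at $V$ is the forward angle, the corrected inequality can point the wrong way for your purposes. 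So the chain $\overrightarrow{V}=2\beta\le\beta+\gamma<\overrightarrow{P}$ does not survive. In case (4) your existence argument for the crossing point $W$ is fine, but the decisive step ($\overrightarrow{W}<\overrightarrow{P}$) is only announced (``I would show\dots''), not carried out.

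The paper's proof avoids the triangle $P'\Sp\St$ entirely and works directly with the polygons cut out by the configuration. In case (4) it applies the Exterior Angle Theorem to the triangle with vertices $U$, $Q'$, $W$: the forward angle at $Q'$ is an exterior angle of that triangle, hence strictly exceeds the interior angle $\overrightarrow{W}$. In case (2) it uses the quadrilateral $VQ'UP_1''$ whose vertices are the two self-intersection lifts $V$, $U$ and the two lifts $Q'$, $P_1''$; its interior angles are $\overrightarrow{V}$, $\pi-\overrightarrow{Q}$, $\overrightarrow{U}$, $\pi-\overrightarrow{P}$, and the hyperbolic bound $<2\pi$ on their sum gives $\overrightarrow{U}+\overrightarrow{V}<\overrightarrow{P}+\overrightarrow{Q}=2\overrightarrow{P}$, whence at least one of $\overrightarrow{U},\overrightarrow{V}$ is less than $\overrightarrow{P}$. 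If you want to keep your framework, replace the angle-doubling step by this quadrilateral defect argument; that is the missing idea.
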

\begin{proof}
    By Lemma~\ref{lem:reflections} (and with the same notations) all the configurations  of $\U$, $\V$, $\gp$ and $\gq$ are as in Figure~\ref{fig:cases}.
    
	Suppose that the relative  positions of $\U$, $\V$, $\gp$ and $\gq$ are as in Figure~\ref{fig:cases}(4). The sector of the plane limited by lines $\U$ and $\V$, contains a full lap of a lift of $x$ (in light blue Figure~\ref{fig:cases}) and is traversed by part of a lap of $y$ (in red). As the figure shows, the $x$-segment and the $y$-segment  intersect at a point $W$. 

    By the Exterior Angle Theorem, the exterior angle at $Q'$ of the triangle $UQ'W$ (that is, the directed angle $\overrightarrow{Q'}$) is larger than the interior angle at $W$ (that is, the directed angle $\overrightarrow{W}$). In symbols, $\overrightarrow{W}< \overrightarrow{Q'}$. Thus, (1) holds.

    If the relative  positions of $\U$, $\V$, $\gp$ and $\gq$ are as in Figure~\ref{fig:cases}(2), a quadrilateral $VQ'UP_1''$ is determined. The desired result (2) follows from the fact that the sum of the interior angles of a quadrilateral is less than $2\pi$.\end{proof}

\begin{lemma}\label{lem:lx=ly}  If $\ell_x = \ell_y$ then there exist a self-intersection point $U$ of $x$  and a self-intersection point $V$ of $y$ such that $\overrightarrow{U}=\overrightarrow{V} < \overrightarrow{P}=\overrightarrow{Q}.$ (Figure~\ref{fig:lx=ly}.) 
\end{lemma}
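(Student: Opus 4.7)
The plan is to form a convex geodesic quadrilateral as in the proof of Lemma~\ref{lem:lx>ly}(2), derive an angle inequality from the hyperbolic Gauss--Bonnet formula applied to this quadrilateral, and sharpen it to the desired equality-plus-strict-inequality using an extra isosceles-triangle congruence which is available precisely when $\ell_x=\ell_y$.

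Under the hypothesis $\ell_x=\ell_y$, Lemma~\ref{lem:reflections}(3) places us in case~(2) of Figure~\ref{fig:cases}. Let $U'\in\U$ be the lift of a self-intersection point $U$ of $x$ at which an $x$-segment of $\gp$ meets an $x$-segment of $\gq$, and let $V'\in\V$ be the analogous lift of a self-intersection point $V$ of $y$. Together with the lifts $P_1''$ of $P$ and $Q'$ of $Q$, these four points form the vertices of a convex geodesic quadrilateral $\mathcal{Q}=P_1''V'Q'U'$ whose sides alternate between $x$-segments and $y$-segments. Tracking orientations (the alternating ``$x$-in/$y$-out'' pattern along $\gp$ from Section~3, and the mirrored pattern at lifts of $Q$ forced by the opposite sign of intersection at $Q$; see Figure~\ref{fig:signs}), the interior angles of $\mathcal{Q}$ are $\pi-\overrightarrow{P}$ at $P_1''$, $\pi-\overrightarrow{Q}$ at $Q'$, $\overrightarrow{U}$ at $U'$, and $\overrightarrow{V}$ at $V'$. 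The hyperbolic Gauss--Bonnet inequality (a convex hyperbolic quadrilateral has interior angle sum strictly less than $2\pi$) together with Corollary~\ref{cor:angle}'s equality $\overrightarrow{P}=\overrightarrow{Q}$ yields
\[
\overrightarrow{U}+\overrightarrow{V}<2\,\overrightarrow{P}.
\]

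The new ingredient needed for the lemma (beyond Lemma~\ref{lem:lx>ly}) is the equality $\overrightarrow{U}=\overrightarrow{V}$, which crucially uses $\ell_x=\ell_y$. The two $x$-lifts crossing at $U'$ form an isosceles geodesic triangle whose apex is $U'$, whose two equal sides are the segments from $U'$ to the midpoints on $\emm$ of the two $x$-laps (each of length $\ell_x/2$ by Corollary~\ref{cor:unique}), and whose base lies on $\emm$. The analogous triangle for $V'$ has two equal sides of length $\ell_y/2$ and a base on $\emm$. Using the reflection symmetries $\rho_\U$ and $\rho_\V$ together with the half-period $\ell_{xy}/2$ spacing of consecutive lap-midpoints on $\emm$, one computes that both triangle bases have the same length. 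The hypothesis $\ell_x=\ell_y$ then matches the equal-side lengths, so SSS gives the two isosceles triangles congruent. Their apex angles agree, and after an orientation check (using $\rho_\U,\rho_\V$ to see that the two sides meeting at $U'$ point in the forward directions of the two $x$-lifts, and likewise at $V'$), these apex angles are precisely $\overrightarrow{U}$ and $\overrightarrow{V}$. Combining $\overrightarrow{U}=\overrightarrow{V}$ with the previous inequality yields $\overrightarrow{U}=\overrightarrow{V}<\overrightarrow{P}=\overrightarrow{Q}$, as required.

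The principal obstacle is the base-length equality in the second step: the midpoints of the relevant $x$-laps and $y$-laps on $\emm$ must be located precisely from the interplay of $\rho_\U,\rho_\V$ and the translation action of $X\circ Y$. A secondary obstacle is the orientation bookkeeping, both for identifying the interior angles of $\mathcal{Q}$ with $\pi-\overrightarrow{P}$ and $\overrightarrow{U}$ (rather than their supplements) in the first step, and for matching the triangle apex angles with the forward self-intersection angles $\overrightarrow{U},\overrightarrow{V}$ in the second.
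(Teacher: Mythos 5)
Your overall architecture matches the paper's: first an angle--sum bound on a geodesic quadrilateral with vertices at lifts of $P$, $Q$ and of the two self-intersection points, giving $\overrightarrow{U}+\overrightarrow{V}<\overrightarrow{P}+\overrightarrow{Q}$, and then a congruence of the two triangles cut off by the axis $\emm$ at $U'$ and at $V'$ to upgrade this to $\overrightarrow{U}=\overrightarrow{V}<\overrightarrow{P}$. The first step and your computation that the two triangle bases on $\emm$ have equal length (the paper's $AB=AC-BC=BD-BC=CD$) are fine.

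The congruence step, however, has a genuine gap. You claim the two legs of the triangle with apex $U'$ -- the segments from $U'$ to the midpoints of the two $x$-laps, which do lie on $\emm$ -- each have length $\ell_x/2$ "by Corollary~\ref{cor:unique}". That corollary says the \emph{endpoints} of a lap (the lifts of $P$ or $Q$) are at distance $\ell_x/2$ from the lap's midpoint; $U'$ is the lift of a self-intersection point, an interior point of each lap, so its distance to the midpoint is $\ell_x/2$ minus its distance to the nearer endpoint, and this depends on where along the laps the crossing occurs. The two legs at $U'$ are indeed equal to each other (because $\rho_\U$ fixes $U'$ and swaps the two laps), but the leg length at $U'$ and the leg length at $V'$ are not both determined by $\ell_x=\ell_y$, so SSS does not apply: the cross-triangle leg equality is a \emph{consequence} of the congruence you are trying to prove, not an available input. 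The data that actually are available are the equal bases together with equal base angles: when $\ell_x=\ell_y$ each lap-triangle over $\emm$ (vertex of the zigzag plus its two adjacent axis crossings) is isosceles with legs $\ell_x/2=\ell_y/2$ and apex angle $\pi-\overrightarrow{P}=\pi-\overrightarrow{Q}$ (Corollary~\ref{cor:angle}), so every lap of $\gp$ and of $\gq$ meets $\emm$ at the same angle; hence the two triangles are congruent by ASA. This is exactly the paper's route, and replacing your SSS by this ASA argument repairs the proof.
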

\begin{proof}
\begin{figure}[ht]
	\centering
	\includegraphics[width=.5\textwidth]{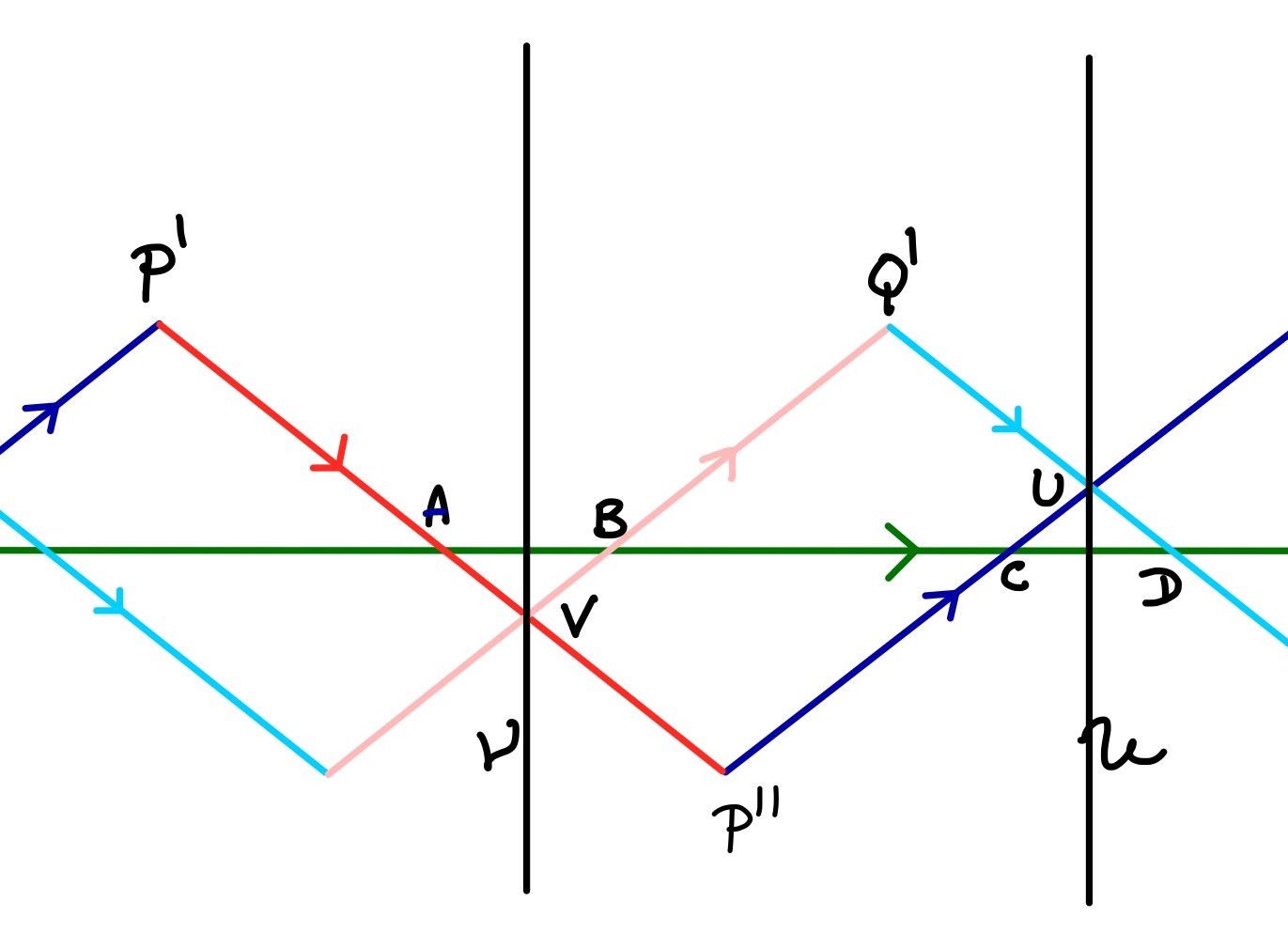}
\caption{Study of case $\ell_x=\ell_y$}.
	\label{fig:lx=ly}
\end{figure}

We refer to Figure~\ref{fig:lx=ly} for the labels of the geometric objects. 

Note that the segments $AC$ and $BD$ are congruent since their length is $\ell_{xy}/2$. Hence $AB=AC-BC=BD-BC=CD$. The triangles $VAB$ and $UCD$ are congruent by angle-side-angle. This implies that $\overrightarrow{U}=\overrightarrow{V}$. 

The triangles $BQ'D$ and $AP''C$ are congruent by SAS (side-angle-side). Then the interior angle at $B$ and the interior angle at $C$ add up to $\pi$.

Since the sum of the interior angles of the quadrilaterlal UQ'BC is less than $2\pi$, and the interior angle at $B$ and the interior angle at $C$ add up to $\pi$, $\pi-\overrightarrow{Q'}+\overrightarrow{U}<\pi$, as desired.
 
\end{proof}

\subsection{Characterization of simple curves via powers }

\begin{theorem}\label{theo:simple}
If $[x,x^n]=0$ for some $n \ge 2$ then $x$ is simple. 
\end{theorem}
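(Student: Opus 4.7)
The plan is to argue by contradiction via a minimum-angle descent driven by Lemma~\ref{lem:lx>ly}. I implicitly assume $x$ is non-power (so that being ``simple'' makes sense in the first place) and suppose, for contradiction, that the geodesic $x$ admits at least one self-intersection.

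The preliminary step I would carry out is an angle-matching observation: if we fix transversal representatives of $x$ and $x^n$ obtained by perturbing $x^n$ slightly off the underlying geodesic, then every forward angle appearing at a point of $x \cap x^n$, and every forward angle at a self-intersection of $x^n$, lies in
\[
\Theta(x) \;:=\; \{\,\overrightarrow V : V \text{ is a self-intersection of the geodesic } x\,\}.
\]
Indeed, since $x^n$ is the closed geodesic $x$ traced exactly $n$ times, near a self-intersection point $V$ of $x$ with the two strands of $x$ in forward directions $\vec a, \vec b$, the perturbation splits the $n$ parallel $\vec a$-passes and the $n$ parallel $\vec b$-passes of $x^n$ into $2n$ nearby strands. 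This creates (after perturbation) $2n$ new intersections with $x$ (one for each oppositely-directed pair) and $n^2$ new self-intersections of $x^n$ clustered near $V$, each with forward angle approaching $\overrightarrow V$ in the small-perturbation limit. Away from the self-intersections of $x$, the $n$ passes of $x^n$ remain locally parallel and produce no new crossings. Setting $\theta_0 := \min \Theta(x)$, I therefore obtain
\[
\theta_0 \;=\; \min\{\,\overrightarrow P : P \in x \cap x^n\,\},
\]
and no self-intersection of $x^n$ has forward angle strictly smaller than $\theta_0$.

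With the angle matching in hand, the descent closes in one shot. I pick $P_0 \in x \cap x^n$ achieving $\overrightarrow{P_0} = \theta_0$. Since $[x,x^n] = 0$, the Goldman-bracket term at $P_0$ must cancel with a term at some $Q_0 \in x \cap x^n$ (opposite sign, same free homotopy class of loop product). Because $\ell_x \le \ell_{x^n} = n\ell_x$, Lemma~\ref{lem:lx>ly} applies to the cancelling pair $(P_0, Q_0)$ with $y = x^n$. Its conclusion (1) would produce $W \in x \cap x^n$ with $\overrightarrow W < \overrightarrow{P_0} = \theta_0$, contradicting the minimality of $\theta_0$. Its conclusion (2) would produce a self-intersection of $x$ or of $x^n$ with forward angle strictly below $\theta_0$, contradicting the angle matching. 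Either way we reach a contradiction, so $x$ is simple.

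The step I expect to be the main obstacle is the angle-matching observation: verifying carefully, via the local picture of a transversalizing perturbation near a self-intersection of $x$, that all new intersection points and self-intersections are accounted for and that their forward angles agree (in the small-perturbation limit) with those of the underlying self-intersections of $x$. Once this local analysis is pinned down, the one-shot application of Lemma~\ref{lem:lx>ly} finishes the argument.
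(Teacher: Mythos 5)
Your proposal is correct and follows essentially the same route as the paper: pick a cancelling pair realizing the minimal forward angle and derive a contradiction from Lemma~\ref{lem:lx>ly} applied with $y=x^n$. The only difference is that you make explicit the angle-matching between self-intersections of $x$, self-intersections of $x^n$, and points of $x\cap x^n$, which the paper's proof uses implicitly when it identifies terms of $[x,x^n]$ with self-intersection points of $x$.
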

\begin{proof} Suppose that $x$ is not simple and that $[x,x^n]=0$. Let $P$ and $Q$ be two self-intersection points of $x$, such that the corresponding terms cancel and that the forward angles at $P$ (and $Q$) are the smallest among all forward angles of self-intersection points of $x$. By Lemma~\ref{lem:lx>ly}, there is a self-intersection point of $x$, with a forward angle smaller than $\overrightarrow{P}$. Since $[x,x^n]=0$, the term corresponding to this self-intersection point cancels, which is a contradiction. Thus, the corollary holds. 
\end{proof}


\subsection{Characterization of simple curves via $x,\bar{x}$ }

Recall that we denote by $\bar{x}$  the curve $x$ with opposite direction.

\begin{theorem}\label{theo:xbarx}
 Let $x$ be a non-power geodesic. Then $[x,\bar{x}]=0$ if and only if $x$ is simple. 
\end{theorem}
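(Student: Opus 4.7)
The easy direction is immediate: if $x$ is simple, a regular neighborhood of $x$ is an embedded annulus, so $\bar{x}$ admits a parallel representative disjoint from $x$ and $[x,\bar x]=0$. For the hard direction I argue by contradiction, assuming $x$ is non-power, non-simple, and $[x,\bar x]=0$. After perturbing $\bar x$ to $\bar{x}'$ so that $x\pitchfork\bar{x}'$, each self-intersection $P$ of $x$ with forward angle $\theta_P$ contributes exactly two transverse intersections of $x\cap\bar{x}'$, both with forward angle $\pi-\theta_P$ and with opposite signs. Writing $x=\alpha_P\beta_P$ for the decomposition at $P$, a direct computation based on Corollary~\ref{cor:unique} identifies the two corresponding terms of $[x,\bar x]$ as $-\langle[\alpha_P,\beta_P]\rangle$ and $+\langle[\alpha_P,\beta_P]^{-1}\rangle$, two commutator classes that are inverses of one another.

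Next I mimic the proof of Theorem~\ref{theo:simple}: since the bracket vanishes, every term is cancelled, and I select a cancellation pair of $(x,\bar{x}')$-intersections whose common forward angle is extremal. Because $\ell_x=\ell_{\bar x}$, Lemma~\ref{lem:lx=ly} applies to this pair and produces self-intersection points $U$ of $x$ and $V$ of $\bar x$ (the latter being a self-intersection of $x$ as well) with equal self-intersection forward angles satisfying $\theta_U=\theta_V<\pi-\theta_P$. Choosing $P$ so that $\theta_P$ is \emph{maximal} among the self-intersection forward angles of $x$, the clean case is $\theta_P\geq\pi/2$: then $\theta_U<\pi-\theta_P\leq\pi/2\leq\theta_P$, yielding a new self-intersection whose angle is strictly below $\theta_P$, and tracing the cancellation constraints on the two terms at $U$ (which must also vanish) yields the contradiction by descent.

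The main obstacle, and what distinguishes this theorem from Theorem~\ref{theo:simple}, is that Lemma~\ref{lem:lx=ly} compares the self-intersection angle $\theta_U$ to the intersection angle $\pi-\theta_P$ rather than to $\theta_P$ itself, so when $\theta_P<\pi/2$ the direct descent is blocked. I expect to close this case with two complementary ingredients. First, the ``within-pair'' cancellation at $P$ would force $\langle[\alpha_P,\beta_P]\rangle=\langle[\alpha_P,\beta_P]^{-1}\rangle$, meaning that the commutator geodesic coincides as a directed free homotopy class with its orientation reverse; this symmetry must be excluded at the extremal self-intersection by an elementary argument on the axis of $[\alpha_P,\beta_P]$ in $\mathbb{H}$. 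Second, iterating Lemma~\ref{lem:lx=ly} through successive cross-self-intersection cancellations produces a chain of self-intersection angles $\theta_{U_0},\theta_{U_1},\ldots$ satisfying $\theta_{U_i}+\theta_{U_{i+1}}<\pi$; since $x$ has only finitely many self-intersections this chain must close into a cycle, and combined with the string of commutator-class equalities forced by each cancellation, the resulting cyclic system of constraints is inconsistent, furnishing the required contradiction.
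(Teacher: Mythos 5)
Your easy direction and your setup for the hard direction are fine, and you correctly identify both the relevant tool (Lemma~\ref{lem:lx=ly}, applicable because $\ell_x=\ell_{\bar x}$) and the real obstacle: the lemma only bounds the new self-intersection angle by the \emph{supplement} $\pi-\theta_P$ of the self-intersection angle at $P$, so a naive extremal/descent argument does not close. But your proposed resolution of that obstacle is not a proof. The second ingredient in particular does not work as stated: a cyclic chain of constraints $\theta_{U_i}+\theta_{U_{i+1}}<\pi$ is perfectly consistent (take all angles equal to any value below $\pi/2$), so the contradiction would have to come entirely from the unstated ``string of commutator-class equalities,'' which you never derive; the first ingredient (excluding the within-pair cancellation via the axis of $[\alpha_P,\beta_P]$) is likewise only announced. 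Note also that your argument never uses the hypothesis that $x$ is non-power, which is a warning sign, since the theorem is false without it.

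The idea you are missing is the one the paper uses to convert the supplement bound into a genuine angle comparison. Choose $P$ with \emph{maximal} forward angle among self-intersection points of $x$, and pass to the configuration of Lemma~\ref{lem:lx=ly} in the universal cover, with the parallelogram $Q'VP''U$. The segment $[Q',Q'')$ is a full lift of $x$ (a fundamental domain on that geodesic), so the self-intersection point of $x$ lying under $V$ has a lift $T$ on $[Q',Q'')$, carried there by an orientation-preserving isometry. The degenerate positions $T=Q'$ and $T=U$ force $x$ to be a proper power --- this is exactly where the non-power hypothesis enters. In the remaining positions the second branch of $x$ through $T$ must cross one of the sides $Q'V$, $VP''$, $P''U$ of the parallelogram, and in each case the Exterior Angle Theorem or the hyperbolic quadrilateral angle-sum inequality produces a self-intersection point of $x$ whose forward angle is strictly \emph{larger} than $\overrightarrow{P}$, contradicting maximality. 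Without this fundamental-domain argument locating the second lift $T$ and the ensuing case analysis, the hard direction remains open in your write-up.
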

\begin{proof}
If $x$ is simple then by definition of the bracket $[x,\bar{x}]=0$. 

To study the other implication, we argue by contradiction. Suppose that there exists a non-power geodesic $x$, such that  $[x,\bar{x}]=0$ and  $x$ is not simple. Then there are pairs of self-intersection points of $x$ that cancel. 
Let $P$ and $Q$ one of such pairs, with the property that   the forward angles at $P$ and $Q$ are the largest among all forward angles of self-intersection points of $x$.

We will show that there exists another self-intersection point of $x$ whose forward angle is larger than that of $P$.

By Lemma~\ref{lem:lx=ly}  the  configuration of $\gp$ and $\gq$ is as in the Figure \ref{fig:lx=ly}.

\begin{figure}[h]
    \includegraphics[width=0.8\textwidth]{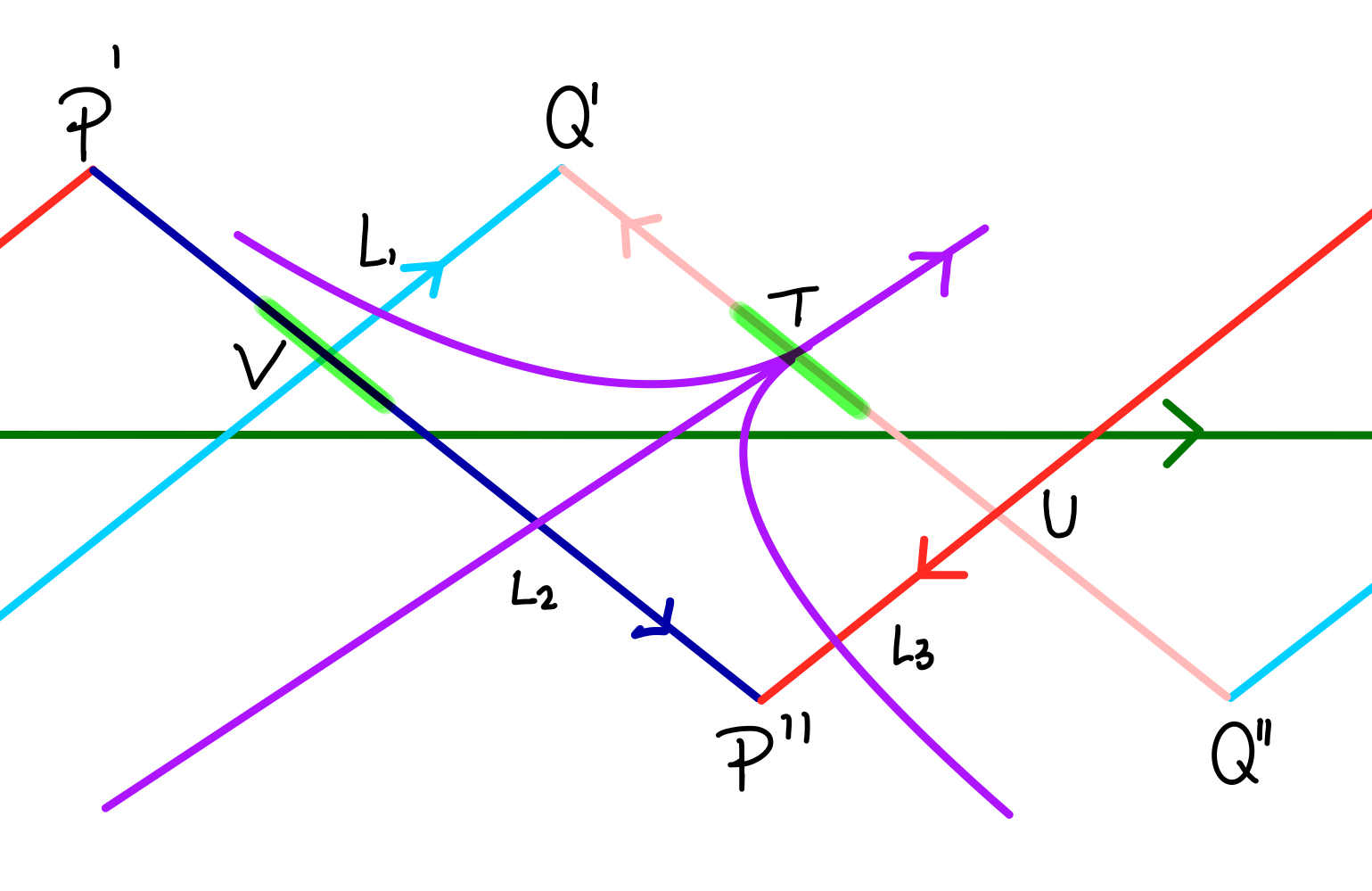}

    \caption{Proof of Theorem~\ref{theo:simple}. The arrows in the red and blue geodesic segments denote the direction of $x$}\label{fig:xbarx}
\end{figure}

Consider a small segment $s$ of $P'P''$ on a  neighborhood of  $V$ (highlighted in green in Figure~\ref{fig:xbarx}).  Since $[Q'Q'')$ is full lift of $x$ (its length being $\ell_x$), $[Q'Q'')$ contains a lift of the projection of the segment $s$, and a point $T$, a lift of the projection of $V$.
Hence, there is an orientation preserving isometry  mapping $V$ to  $T$  and the interval $s$ to an interval (or union of two intervals in the extreme case $T=Q'$) in $[Q',Q'')$. 

There are four possibilities for the point $T$: 
$T=Q'$,
$T$ in $(Q',U)$,
  $T=U$, or
$T$ in $(U,Q')$.

If $T=Q'$ then there is an isometry mapping a small segment of $P''P'$ starting at $V$ and going in direction to $P'$to a small segment starting at $Q'$ and going in direction of $Q''$. This implies the $x$ is a proper power of another geodesic, contradicting our hypothesis. A similar argument shows that the assumption $T=U$ leads to a contradiction.


We are going to study now the case $T$ in $(Q',T)$ (the case $T$ in $(T,Q'')$ follows similarly). Since  $T$ is an intersection there is another lift of $x$ through $T$, such that $\overrightarrow{T}=\overrightarrow{V}$. This branch has to enter the parallelogram $Q'VP''U$ though one of the following segments: $Q'V$, $VP''$ or $P''U$, see Figure~\ref{fig:xbarx}. 

If the lift of $x$ through $T$ enters the parallelogram through $Q'V$, we have a triangle $L_1 TQ'$ as in Figure~\ref{fig:xbarx}. By the Exterior Angle Theorem, $\overrightarrow{Q'}<\overrightarrow{T}$. This contradicts the assumption that the angle at $Q'$ is the largest.

If the lift of $x$ through $T$ enters the parallelogram through $VP''$, we have a paralelogram $L_2TUP''$. Since 
$\overrightarrow{T}=\overrightarrow{V}$,  By Lemma~\ref{lem:lx=ly}, $\overrightarrow{U}=\overrightarrow{V}$. Since the sum of the interior angles of $L_2TUP''$ is less than $2\pi$, we have that $\overrightarrow{Q}<\overrightarrow{L_2}$ once more contradicting  the assumption that the angle at $Q'$ is the largest.

If the lift of $x$ through $T$ enters the parallelogram through $P''U$, then $\overrightarrow{U}<\overrightarrow{T}$ by the Exterior Angle Theorem. This contradicts the fact that $\overrightarrow{U}=\overrightarrow{T}$. Thus, the proof is complete. \end{proof}

 \bibliographystyle{plain}

\bibliography{gla.bib}
\end{document}